\setlist[description]{font=\normalfont\scshape}
\xpatchcmd{\proof}{\itshape}{\normalfont\bfseries}{}{}
\newtheoremstyle{repeat}{}{}{\itshape}{}{\bfseries}{.}{.5em}{#3, repeated}
\newtheorem{theorem}{Theorem}[section]
\newtheorem{proposition}[theorem]{Proposition}
\newtheorem{lemma}[theorem]{Lemma}
\newtheorem{corollary}[theorem]{Corollary}
\newtheorem{fact}[theorem]{Fact}
\newtheorem{claim}{Claim}[theorem]
\theoremstyle{definition}
\newtheorem{definition}[theorem]{Definition}
\newtheorem{remark}[theorem]{Remark}
\newtheorem{convention}[theorem]{Convention}
\theoremstyle{repeat}
\newtheorem*{repeated-theorem}{Repeat}
\newcommand{\MM}{\mathfrak{M}}
\newcommand{\T}{\mathcal{T}}
\newcommand{\Z}{\mathbb{Z}}
\DeclareMathOperator{\tp}{tp}
\DeclareMathOperator{\Lstp}{Lstp}
\DeclareMathOperator{\Aut}{Aut}
\DeclareMathOperator{\dom}{dom}
\renewcommand{\d}{\operatorname{d}}
\DeclareMathOperator{\EM}{EM}
\renewcommand{\phi}{\varphi}
\newcommand{\equivls}{\equiv^\textup{Ls}}
\newcommand{\lex}{{\textup{lex}}}
\def\Ind#1#2{#1\setbox0=\hbox{$#1x$}\kern\wd0\hbox to 0pt{\hss$#1\mid$\hss}
\lower.9\ht0\hbox to 0pt{\hss$#1\smile$\hss}\kern\wd0}
\def\ind{\mathop{\mathpalette\Ind{}}}
\def\Notind#1#2{#1\setbox0=\hbox{$#1x$}\kern\wd0\hbox to 0pt{\mathchardef
\nn="3236\hss$#1\nn$\kern1.4\wd0\hss}\hbox to 0pt{\hss$#1\mid$\hss}\lower.9\ht0
\hbox to 0pt{\hss$#1\smile$\hss}\kern\wd0}
\title{Corrigendum to ``Kim-independence in positive logic''}
\author{Jan Dobrowolski and Mark Kamsma}
\date{\today}
\thanks{The second author is supported by the EPSRC grant EP/X018997/1.}
\email[Jan Dobrowolski]{dobrowol@math.uni.wroc.pl}
\address[Jan Dobrowolski]{Department of Mathematics, University of Manchester, Oxford Road, Manchester, M13 9PL, UK and \newline
Instytut Matematyczny Uniwersytetu Wroc\l{}awskiego, pl. Grunwaldzki 2/4, 50-383 Wroc\l{}aw}
\email[Mark Kamsma]{mark@markkamsma.nl}
\urladdr[Mark Kamsma]{https://markkamsma.nl}
\address[Mark Kamsma]{School of Mathematical Sciences, Queen Mary University of London, London, E1 4NS, UK}
\begin{document}

\begin{abstract}
The proof of the Independence Theorem for Kim-independence in positive thick NSOP$_1$ theories from \cite{dobrowolski_kim-independence_2022} contains a gap. The theorem is still true, and in this corrigendum we give a different proof.
\end{abstract}

\maketitle

\tableofcontents

\section{Introduction}
\label{sec:introduction}
In \cite{dobrowolski_kim-independence_2022} we proved various properties for Kim-independence in thick positive NSOP$_1$ theories. One of these properties is the Independence Theorem \cite[Theorem 7.7]{dobrowolski_kim-independence_2022}. There is a gap in the proof there, however the statement remains true. In this corrigendum we present an alternative proof, based on the proof in full first-order logic from \cite{kaplan_kim-independence_2020}.


\textbf{Overview.} After discussing some preliminaries in Section \ref{sec:preliminaries}, we discuss the gap in Section \ref{sec:the-gap}. We then develop some technical tools in Sections \ref{sec:technical-tools} and \ref{sec:spread-out-trees}, where the former focuses on general tools and the latter focuses on so-called spread out trees, an important concept for Kim-independence in NSOP$_1$ theories. Finally, in Section \ref{sec:independence-theorem} we present the new proof of the Independent Theorem.

\textbf{Acknowledgements.} We would like to thank W.\ Kho, who found the gap while working on his Master's thesis \cite{kho_primer_2023}. We would also like to thank I.\ Kaplan and N.\ Ramsey for discussions leading to this proof.

\section{Preliminaries}
\label{sec:preliminaries}
We recall some basic definitions and conventions, and refer to \cite[Sections 2 and 3]{dobrowolski_kim-independence_2022} for more details, explanations and references.
\begin{convention}
We establish some basic conventions.
\thlabel{basic-conventions}
\begin{enumerate}[label=(\roman*)]
\item We will drop ``positive'' everywhere, so ``formula'', ``type'' and ``theory'' always mean ``positive (existential) formula'', ``positive (existential) type'' and ``positive theory'' (or ``h-inductive theory'') respectively.
\item Whenever we say \emph{type} we mean ``maximal type'', otherwise we say \emph{partial type}.
\item We work in a monster model $\MM$, which we often drop from the notation.
\item We assume all parameter sets considered to be small (w.r.t.\ the monster), unless we consider the monster as a parameter set. Lowercase Latin letters $a, b, \ldots$ will denote (possibly infinite) tuples in the monster and uppercase Latin letters $A, B, \ldots$ will denote parameter sets in the monster. We will use $M$ and $N$ when these sets are e.c.\ models. Lowercase Greek letters $\alpha, \beta, \ldots$ are used for realisations of global (partial) types (in a bigger monster).
\end{enumerate}
\end{convention}
\begin{definition}
We recall some basic definitions.
\thlabel{basic-definitions}
\begin{enumerate}[label=(\roman*)]
\item The notation $\d_C(a, b) \leq n$ means that there are $a = a_0, a_1, \ldots, a_n = b$ such that $a_i$ and $a_{i+1}$ are on a $C$-indiscernible sequence for all $0 \leq i < n$.
\item A theory $T$ is called \emph{thick} if being an indiscernible sequence is type-definable. That is, there is a partial type $\Theta((x_i)_{i < \omega})$ such that $\models \Theta((a_i)_{i < \omega})$ iff $(a_i)_{i < \omega}$ is indiscernible. This is equivalent to having $\d_z(x, y) \leq n$ type-definable for all $n < \omega$.
\item For a theory $T$ we define the cardinal $\lambda_T = \beth_{(2^{|T|})^+}$.
\item A Lascar strong type $\Lstp(a/N)$ with $M \subseteq N$ is called \emph{$M$-Ls-invariant} if for any $b, b' \in N$ with $b \equivls_M b'$ we have $ab \equivls_M ab'$.
\item Suppose that $q(x)$ is a global $M$-Ls-invariant type $q(x)$. A \emph{Morley sequence in $q$ (over $M$)} is a sequence $(a_i)_{i < \omega}$ such that $(a_i)_{i < \omega} \equivls_M (\alpha_i)_{i < \omega}$ where $(\alpha_i)_{i < \omega} \models q^{\otimes \omega}$.
\end{enumerate}
\end{definition}
We recall some facts that will be used throughout, often implicitly.
\begin{fact}
\thlabel{useful-facts}
Let $T$ be a thick theory.
\begin{enumerate}[label=(\roman*)]
\item For any $\kappa \geq |A| + |T|$ there is $\kappa^+$-saturated $N \supseteq A$ with $|N| \leq 2^\kappa$.
\item Let $M$ be $\lambda_T$-saturated, then $a \equiv_M a'$ implies $\d_M(a, a') \leq 2$.
\item If $N \supseteq C$ is $(2^{|C| + \lambda_T})^+$-saturated and $q(x)$ and $r(x)$ are global $C$-Ls-invariant types with $q|_N = r|_N$ then $q = r$.
\item For any $a$ and $M$, $\Lstp(a/M)$ extends to some global $M$-Ls-invariant type.
\item If $q$ is a global $M$-Ls-invariant type and $B \supseteq M$ then $q$ is also $B$-Ls-invariant.
\end{enumerate}
\end{fact}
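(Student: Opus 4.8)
The plan is to prove the five items separately; all are standard facts about thick positive theories, contained in \cite[Section~3]{dobrowolski_kim-independence_2022}, and I would argue as follows. For (i), build $N$ as the union of an increasing chain $(N_i)_{i < \kappa^+}$ of e.c.\ models: take $N_0 \supseteq A$ an e.c.\ model of size at most $|A| + |T| = \kappa$ (existential closure only adds witnesses for existential formulas over finite tuples, so it increases the cardinality by at most $|T|$); given $N_i$ of size at most $2^\kappa$, take $N_{i+1} \supseteq N_i$ an e.c.\ model realising every type over every subset of $N_i$ of size at most $\kappa$ --- there are at most $2^\kappa$ such types, as $\kappa \geq |T|$, so $N_{i+1}$ can be chosen of size at most $2^\kappa$ --- and take unions at limit stages. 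Since a union of a chain of e.c.\ models is e.c., $N$ is an e.c.\ model with $|N| \leq \kappa^+ \cdot 2^\kappa = 2^\kappa$, and it is $\kappa^+$-saturated because any parameter set of size at most $\kappa$ already lies in some $N_i$, $i < \kappa^+$, and its types are realised in $N_{i+1}$. Item (v) is immediate from the definitions: for $B \supseteq M$, $b \equivls_B b'$ implies $b \equivls_M b'$, so any $M$-Ls-invariant type is a fortiori $B$-Ls-invariant.

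Item (ii) is where I expect the real work to be, and it is the reason the constant $\lambda_T = \beth_{(2^{|T|})^+}$ is defined as it is. Given $a \equiv_M a'$, one first produces an $M$-indiscernible sequence by an Erd\H{o}s--Rado extraction from a sufficiently long sequence of realisations of $\tp(a/M)$; thickness is essential here, since it makes being an indiscernible sequence cut out by a partial type of size at most $|T|$, so that the cardinal bookkeeping in the extraction is governed by $2^{|T|}$ rather than by $|M|$, and $\lambda_T$ is precisely large enough for this (and for the counting of Lascar strong types needed below). An automorphism over $M$ transports such a sequence to one starting at $a$, a second one yields one starting at $a'$, and since any two elements of an $M$-indiscernible sequence have Lascar distance at most $1$ over $M$, a suitable common point of the two sequences witnesses $\d_M(a, a') \leq 2$.

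Items (iii) and (iv) concern $M$-Ls-invariant types. Item (iv) is a standard fact; thickness, which makes $\Lstp(a/M)$ type-definable, is what allows one to extend it to a global $M$-Ls-invariant type, essentially by a compactness argument (one can also obtain such an extension via a global type finitely satisfiable in a suitable set, using that finite satisfiability forces Ls-invariance). For (iii), let $q, r$ be global $C$-Ls-invariant types with $q|_N = r|_N$, let $B \supseteq N$ be small, and take a formula $\phi(x, b)$ with $b \in B$ a finite tuple. Since there are at most $2^{|C| + \lambda_T}$ Lascar strong types of a finite tuple over $C$ (here $2^{2^{|T|}} < \lambda_T$ is used) and $N$ is $(2^{|C| + \lambda_T})^+$-saturated, there is $b' \in N$ with $b' \equivls_C b$; then $C$-Ls-invariance of $q$ and of $r$ together with $q|_N = r|_N$ gives
\[
\phi(x, b) \in q \iff \phi(x, b') \in q \iff \phi(x, b') \in r \iff \phi(x, b) \in r.
\]
Hence $q|_B = r|_B$ for every small $B \supseteq N$, so $q = r$.
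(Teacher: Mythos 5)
The paper does not actually reprove these items: its ``proof'' is a list of pointers to \cite{dobrowolski_kim-independence_2022} (Fact 2.12, Lemma 2.20, Fact 7.6, Corollary 3.11, Lemma 3.8(ii)), so your sketches have to stand on their own. Items (i) and (v) are fine. Item (ii), however, has a genuine gap exactly at its crux. From an $M$-indiscernible sequence of realisations of $\tp(a/M)$ you can indeed obtain, by automorphisms over $M$, one $M$-indiscernible sequence starting with $a$ and another starting with $a'$; but these two sequences need not have a common point, and nothing in your construction produces one: if $c$ is the second element of the sequence through $a$ and $c'$ that of the sequence through $a'$, all you know is $c \equiv_M c'$, and deducing $\d_M(c, c') \leq 2$ from that is precisely the statement being proved. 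What is needed is a single $c$ with $\d_M(a,c) \leq 1$ and $\d_M(c,a') \leq 1$, and producing that common witness is the whole content of the lemma. Note also that your sketch never uses the hypothesis that $M$ is $\lambda_T$-saturated, and the claim that thickness makes the extraction ``governed by $2^{|T|}$ rather than $|M|$'' is not the mechanism: extracting an $M$-indiscernible sequence does cost a length depending on $|M|$, which is harmless since the monster is large; the role of thickness is to make $\d_M(x,y) \leq n$ type-definable, and the role of saturation is to let the common witness be found \emph{inside} $M$. One correct line: by type-definability and compactness, $\d_M(a,a') \leq 2$ reduces to $\d_{M_0}(b,b') \leq 2$ for finite matching subtuples $b \subseteq a$, $b' \subseteq a'$ and finite $M_0 \subseteq M$; using $\lambda_T$-saturation build $(d_i)_{i < \lambda_T}$ inside $M$ with $d_i \models \tp(b/M_0 d_{<i})$; Erd\H{o}s--Rado extraction over the finite base $M_0$ (here length $\lambda_T$ suffices because $b$ and $M_0$ are finite) gives $i < j$ with $\d_{M_0}(d_i, d_j) \leq 1$, and since $d_j \equiv_{M_0 d_i} b$ this transfers to $\d_{M_0}(d_i, b) \leq 1$; finally, because $d_i \in M$ and $b \equiv_M b'$, the same $d_i$ satisfies $\d_{M_0}(d_i, b') \leq 1$, whence $\d_{M_0}(b, b') \leq 2$. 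It is this last transfer, available only because the witness lies in $M$, that your two-sequence picture cannot imitate.

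Two smaller points. In (iii), the step ``$N$ realises some $b' \equivls_C b$'' is not justified by counting Lascar strong types: the condition $x \equivls_C b$ is not a type over a small subset of $N$ (it mentions $b \notin N$), so the saturation of $N$ does not apply to it directly, and the bound $2^{|C| + \lambda_T}$ on the number of Lascar strong types is itself unclear when $C$ is large. The standard route is to build inside $N$ a $\lambda_T$-saturated $M' \supseteq C$ with $|M'| \leq 2^{\lambda_T + |C|}$ (possible by the saturation of $N$, as in your argument for (i)) and take $b' \in N$ realising $\tp(b/M')$; then $b' \equivls_C b$ by item (ii), and the rest of your computation with $q$ and $r$ goes through. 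In (iv), the parenthetical that finite satisfiability forces Ls-invariance is a negation-based argument that does not transfer to positive logic; the actual argument is a compactness argument using the type-definability of Lascar distance, in the spirit of \thref{ls-invariance-type-definable} and \thref{extend-ls-invariant-type}.
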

\begin{proof}
All of these facts can be found in \cite{dobrowolski_kim-independence_2022}, specifically:
\begin{enumerate}[label=(\roman*)]
\item Fact 2.12,
\item Lemma 2.20,
\item Fact 7.6,
\item Corollary 3.11.
\item Lemma 3.8(ii).
\end{enumerate}
\end{proof}

\section{The gap}
\label{sec:the-gap}
The proof of the Independence Theorem for Kim-independence in thick NSOP$_1$ theories \cite[Theorem 7.7]{dobrowolski_kim-independence_2022} contains a gap, which we outline in this section. The theorem, as stated, is still true. However, to the best of the authors' knowledge, the gap cannot easily be fixed, so we give a different proof.

We refer to the notation used in the proof of \cite[Theorem 7.7]{dobrowolski_kim-independence_2022}. Everything is fine up to the point where it is argued how the theorem follows from what is called ``Claim 2'' (at the bottom of Page 88). By compactness, an $M$-indiscernible sequence $(g_i h_i g'_i h'_i g''_i h''_i)_{i \in \Z}$ is extracted from the data from Claim 2. However, it may be that the properties $(h''_i g''_{i+1})_{i \in \Z} \models (q'|_{z_0, y})^{\otimes \Z}|_M$ and $h_i g_{i+1} \equiv_{M h_{>i} g_{>i+1} h''_{>i} g''_{i+1}} h''_i g''_{i+1}$ are not carried over.

In more detail, the intended argument was as follows. By Claim 2 and compactness we find some sequence $(a_i b_i a'_i b'_i a''_i b''_i)_{i < \lambda}$ with those properties, where $\lambda$ is sufficiently large. Then we would let $(g_i h_i g'_i h'_i g''_i h''_i)_{i \in \Z}$ be an $M$-indiscernible sequence based on $(a_i b_i a'_i b'_i a''_i b''_i)_{i < \lambda}$ using \cite[Lemma 2.17]{dobrowolski_kim-independence_2022}. However, the desired properties are only guaranteed for consecutive elements of $(a_i b_i a'_i b'_i a''_i b''_i)_{i < \lambda}$, whereas all that we get is that for any $i$ there are some $j < k < \lambda$ such that
\[
g_i h_i g'_i h'_i g''_i h''_i g_{i+1} h_{i+1} g'_{i+1} h'_{i+1} g''_{i+1} h''_{i+1} \equiv_M a_j b_j a'_j b'_j a''_j b''_j a_k b_k a'_k b'_k a''_k b''_k.
\]

Re-indexing the sequence (e.g.\ shifting part of the indices by one) does not work, as there are other conditions that we also need to be satisfied (i.e.\ $(g'_i h'_{i+1})_{i \in \Z} \models (q'|_{y_0, z})^{\otimes \Z}|_M$ and $g_i h_i \equiv_{M g_{>i} h_{>i} g'_{>i} h'_{>i}} g'_i h'_i$). We also really need an $M$-indiscernible sequence to apply \cite[Lemma 2.28]{dobrowolski_kim-independence_2022}. Any attempt (that the authors have tried) to make the sequences indiscernible in some other way, for example by trying to work with str-indiscernible trees, ultimately runs into similar issues.

\section{Technical tools}
\label{sec:technical-tools}
We reformulate the chain condition in a form that will be useful to us.
\begin{lemma}[chain condition]
\thlabel{chain-condition}
Let $T$ be a thick NSOP$_1$ theory. Suppose that $a \ind^K_M b$ and that $(b_i)_{i < \omega}$ is a Morley sequence in some global $M$-Ls-invariant type with $b_0 = b$. Then, writing $p(x, b) = \tp(a/Mb)$, we have that
\[
\bigcup_{i < \omega} p(x, b_i)
\]
does not Kim-divide over $M$.
\end{lemma}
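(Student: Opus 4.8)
The plan is to reduce to single formulas and then apply Kim's lemma twice, bridged by a suitable reblocking of the given Morley sequence. Let $q(y)$ be the global $M$-Ls-invariant type witnessing that $(b_i)_{i<\omega}$ is a Morley sequence (so $(b_i)_{i<\omega}$ is $M$-indiscernible). It suffices to show that every finite conjunction of formulas from $\bigcup_{i<\omega}p(x,b_i)$ fails to Kim-divide over $M$. Writing such a conjunction out, the finitely many indices that occur form an increasing tuple $i_1 < \ldots < i_r$, and after grouping conjuncts with equal index the conjunction has the shape $\theta(x,\bar b)$ with $\bar b = (b_{i_1},\ldots,b_{i_r})$ and $\theta(x,y_1,\ldots,y_r) = \bigwedge_{s\le r}\chi_s(x,y_s)$, where each $\chi_s(x,y)$ is an $M$-formula with $\models \chi_s(a,b)$ — because each conjunct $\phi(x,b_{i_s})$ lies in the copy $p(x,b_{i_s})$ of $\tp(a/Mb)$, hence comes from a formula $\phi(x,y)$ with $\models \phi(a,b)$. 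In particular $\chi(x,y) := \bigwedge_{s\le r}\chi_s(x,y)$ is a single $M$-formula with $\chi(x,b)\in\tp(a/Mb)$, so $\chi(x,b)$ does not Kim-divide over $M$ since $a\ind^K_M b$.

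Next I would produce a Morley sequence witnessing, via Kim's lemma, that $\theta(x,\bar b)$ does not Kim-divide over $M$: it is enough to find one Morley sequence $(\bar b^m)_{m<\omega}$ over $M$ in some global $M$-Ls-invariant type with $\bar b^0 = \bar b$ along which $\theta$ stays consistent, i.e.\ with $\{\theta(x,\bar b^m) : m<\omega\}$ consistent. To build it, extend $(b_i)_{i<\omega}$ to a Morley sequence $(b_i)_{i<\omega\cdot\omega}$ in $q$ over $M$ and set $\bar b^m := (b_{\omega\cdot m + i_1},\ldots,b_{\omega\cdot m + i_r})$. Then $\bar b^0 = \bar b$, and $(\bar b^m)_{m<\omega}$ is a Morley sequence over $M$ in the global $M$-Ls-invariant type $q^{\otimes r}$, because the $m$-th block realises the appropriate tensor power over everything of smaller index. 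The key observation is that reorganising, in order of index, the elements occurring among the blocks into a single subsequence $(c_k)_{k<\omega}$ of $(b_i)_{i<\omega\cdot\omega}$ — so that $c_{rm + (s-1)} = b_{\omega\cdot m + i_s}$ — yields a sequence $(c_k)_{k<\omega}$ that is again a Morley sequence in $q$ over $M$, whose first term $c_0 = b_{i_1}$ satisfies $c_0 \equivls_M b_0 = b$ because both are terms of the $M$-indiscernible sequence $(b_i)_{i<\omega\cdot\omega}$. Since Kim-dividing over $M$ of a formula depends only on the Lascar strong type of its parameter over $M$, it follows that $\chi(x,c_0)$ does not Kim-divide over $M$ either.

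Finally I would close the loop with a second application of Kim's lemma. As $\chi(x,c_0)$ does not Kim-divide over $M$ and $(c_k)_{k<\omega}$ is a Morley sequence over $M$ with first term $c_0$, there is some $a^*$ with $\models \chi(a^*,c_k)$, and hence $\models \chi_s(a^*,c_k)$, for all $s\le r$ and $k<\omega$. Reading this at $k = rm + (s-1)$ gives $\models \chi_s(a^*,b_{\omega\cdot m + i_s})$ for all $s$ and $m$, that is $\models \theta(a^*,\bar b^m)$ for all $m$, so $\{\theta(x,\bar b^m) : m<\omega\}$ is consistent. By Kim's lemma, $\theta(x,\bar b)$ does not Kim-divide over $M$, which is exactly what we wanted.

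I do not expect a serious conceptual obstacle here; the one idea that makes it work is choosing the blocks so that, taken together, they still form a single Morley sequence, which is what lets us realise $\theta$ along all the blocks at once from the non-Kim-dividing of the single formula $\chi(x,b)$. The points that need care are the routine-but-necessary bookkeeping that the reblocked sequence $(\bar b^m)_m$ and the subselected sequence $(c_k)_k$ really are Morley sequences over $M$ in the relevant tensor powers of the ambient $M$-Ls-invariant type — which is where the handling of Lascar strong types and the bigger-monster conventions of \cite{dobrowolski_kim-independence_2022} are used — together with the (standard) invariance of Kim-dividing over $M$ under replacing the parameter by one of the same Lascar strong type over $M$.
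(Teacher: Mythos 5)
Your proof is correct, but it takes a genuinely different route from the paper's. The paper argues with the infinite type all at once: since $a \ind^K_M b$, by \cite[Proposition 4.2]{dobrowolski_kim-independence_2022} there is an $Ma$-indiscernible Morley sequence $(b'_i)_{i<\omega}$ in the same global type with $b'_0=b$; choosing $a^*$ with $a(b'_i)_{i<\omega}\equiv_M a^*(b_i)_{i<\omega}$, the given sequence $(b_i)_{i<\omega}$ is $Ma^*$-indiscernible, so \cite[Lemma 6.1]{dobrowolski_kim-independence_2022} yields $a^*\ind^K_M (b_i)_{i<\omega}$ while $a^*b_i\equiv_M ab$ for every $i$; thus the union is realised by an element that is Kim-independent from the whole sequence, which is stronger than, and immediately implies, non-Kim-dividing. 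You instead reduce to a finite conjunction $\theta(x,\bar b)$ by compactness and then use Kim's lemma \cite[Proposition 4.4]{dobrowolski_kim-independence_2022} together with the interleaving trick: realising the single formula $\chi=\bigwedge_s\chi_s$ along the subsequence $(c_k)_{k<\omega}$ and reading the result off blockwise along $(\bar b^m)_{m<\omega}$. That idea works, and the bookkeeping you defer is indeed standard in this framework: increasing subsequences and blocks of a Morley sequence in $q$ are Morley sequences in $q$, respectively in the $M$-Ls-invariant type $q^{\otimes r}$; Kim-dividing of a formula is invariant under replacing the parameter by an $\equiv_M$-equivalent one; and the finite-character reduction only needs that restricting a Morley sequence of the infinite tuple to a finite subtuple again gives a Morley sequence. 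One caveat: extending $(b_i)_{i<\omega}$ to a Morley sequence of length $\omega\cdot\omega$ is not literally covered by the definition recalled in \thref{basic-definitions} and requires a short thickness/saturation argument, but you can avoid it entirely by taking the blocks inside the given sequence, e.g.\ $\bar b^m=(b_{i_1+m(i_r+1)},\ldots,b_{i_r+m(i_r+1)})$. In exchange for this extra bookkeeping, your argument invokes NSOP$_1$ only through Kim's lemma, whereas the paper's much shorter proof relies on Proposition 4.2 and Lemma 6.1 of \cite{dobrowolski_kim-independence_2022} and in fact establishes the stronger ``chain condition with a witness'' form of the statement, which is the version typically used elsewhere.
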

\begin{proof}
Let $q(x)$ be the global $M$-Ls-invariant type in which $(b_i)_{i < \omega}$ is a Morley sequence. As $a \ind^K_M b$ we have by \cite[Proposition 4.2]{dobrowolski_kim-independence_2022} that there is $Ma$-indiscernible $(b'_i)_{i < \omega} \models q^{\otimes \omega}|_M$ with $b'_0 = b$. So we have $(b'_i)_{i < \omega} \equiv_M (b_i)_{i < \omega}$ and we let $a^*$ be such that $a (b'_i)_{i < \omega} \equiv_M a^* (b_i)_{i < \omega}$. Then $(b_i)_{i < \omega}$ is $Ma^*$-indiscernible, and so $a^* \ind^K_M (b_i)_{i < \omega}$ by \cite[Lemma 6.1]{dobrowolski_kim-independence_2022}. We conclude by noting that $a^* b_i \equiv_M a^* b_0 \equiv_M a b'_0 = ab$ for all $i < \omega$.
\end{proof}
\begin{proposition}[Being Ls-invariant is type-definable]
\thlabel{ls-invariance-type-definable}
Let $T$ be a thick theory. Let $C$ be some parameter set and let $N \supseteq C$ be $(2^{|C|+\lambda_T})^+$-saturated (possibly $N$ is the monster). Define $\Sigma(x)$ to be the following partial type over $N$
\[
\bigcup \{ \d_C(xb, xb') \leq 2 : b, b' \in N \text{ are finite tuples such that } \d_C(b, b') \leq 1 \}.
\]
Then a type $q(x)$ over $N$ is $C$-Ls-invariant iff $\Sigma(\alpha)$ for $\alpha \models q$.
\end{proposition}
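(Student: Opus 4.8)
The plan is to prove the two implications separately. Throughout, the role of the saturation of $N$ is that any small $C$-indiscernible sequence, or chain $c_0,\dots,c_m$ with consecutive $c_l$ on a $C$-indiscernible sequence, which exists in the monster can be found inside $N$; and thickness lets us treat the relevant Lascar-distance bounds as partial types and run compactness. We first note that both sides of the claimed equivalence depend only on $q$: $\Sigma$ is a partial type over $N$, and since $N$ is $\lambda_T$-saturated, \thref{useful-facts}(ii) gives that $\alpha \equiv_N \alpha'$ implies $\alpha \equivls_N \alpha'$, so $\Lstp(\alpha/N)$ — hence whether it is $C$-Ls-invariant — is determined by $q$.

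Suppose $\Lstp(\alpha/N)$ is $C$-Ls-invariant and fix finite $b, b' \in N$ with $\d_C(b, b') \le 1$; we must show $\d_C(\alpha b, \alpha b') \le 2$. Since $\d_C(b, b') \le 1$, $b$ and $b'$ lie on a common $C$-indiscernible sequence, which by compactness we may assume is indexed by $\mathbb{Q}$ (densify the sequence, keeping $b$ and $b'$ among its entries); the partial type over $Cbb'$ describing such a sequence has size at most $|C| + \lambda_T$, so a copy of it can be realised inside $N$, giving a $\mathbb{Q}$-indexed $C$-indiscernible sequence $(b_i)_{i \in \mathbb{Q}}$ in $N$ with $b, b'$ among the $b_i$. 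The crucial point is that $(b_i)_{i \in \mathbb{Q}}$ is then automatically $C\alpha$-indiscernible. Indeed, any two increasing tuples of the same finite length drawn from a densely ordered $C$-indiscernible sequence have the same Lascar strong type over $C$ — one moves the coordinates one at a time, each move being a single step along the $C$-indiscernible sub-sequence obtained by restricting $(b_i)_{i \in \mathbb{Q}}$ to an open interval and adjoining the unmoved coordinates to the base — and since all these tuples lie in $N$, $C$-Ls-invariance of $\Lstp(\alpha/N)$ upgrades this to $\alpha(b_{i_1}\cdots b_{i_k}) \equivls_C \alpha(b_{j_1}\cdots b_{j_k})$, hence to $(b_{i_1}\cdots b_{i_k}) \equiv_{C\alpha} (b_{j_1}\cdots b_{j_k})$, for all $i_1 < \cdots < i_k$ and $j_1 < \cdots < j_k$ in $\mathbb{Q}$. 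Therefore $(\alpha b_i)_{i \in \mathbb{Q}}$ is a $C$-indiscernible sequence, so $\alpha b$ and $\alpha b'$ lie on a common $C$-indiscernible sequence and $\d_C(\alpha b, \alpha b') \le 1 \le 2$; thus $\alpha \models \Sigma$.

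Conversely, suppose $\alpha \models \Sigma$ and let $b, b' \in N$ with $b \equivls_C b'$, say $\d_C(b, b') \le n$; we must show $\alpha b \equivls_C \alpha b'$. For a finite tuple of coordinates $S$ put $\beta = b|_S$ and $\beta' = b'|_S$; then $\d_C(\beta, \beta') \le n$, so by saturation of $N$ there is a chain $\beta = c_0, c_1, \dots, c_n = \beta'$ of finite tuples in $N$ with $\d_C(c_l, c_{l+1}) \le 1$ for all $l < n$. Each ``$\d_C(x c_l, x c_{l+1}) \le 2$'' lies in $\Sigma$, so $\d_C(\alpha c_l, \alpha c_{l+1}) \le 2$ and hence $\d_C(\alpha\beta, \alpha\beta') \le 2n$. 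Each formula in the partial type expressing $\d_C(\alpha b, \alpha b') \le 2n$ (which is a partial type by \thref{basic-definitions}(ii)) involves only finitely many coordinates, so it follows, by restriction, from $\d_C(\alpha\beta, \alpha\beta') \le 2n$ for a suitable finite $S$; therefore $\d_C(\alpha b, \alpha b') \le 2n < \omega$, i.e.\ $\alpha b \equivls_C \alpha b'$. So $\Lstp(\alpha/N)$ is $C$-Ls-invariant.

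I expect the main obstacle to be the ``crucial point'' in the first implication, namely that a $C$-indiscernible sequence inside $N$ is automatically $C\alpha$-indiscernible for every $\alpha \models q$: this is where $C$-Ls-invariance is genuinely used, and it rests on the standard (though not completely trivial) fact that increasing tuples of the same length from a densely ordered $C$-indiscernible sequence share a Lascar strong type over $C$. The remaining work is essentially bookkeeping, the one subtlety being to check that the auxiliary $C$-indiscernible sequences and chains can be found inside $N$; this is exactly what the hypothesis that $N$ is $(2^{|C|+\lambda_T})^+$-saturated provides, the bound matching the one in \thref{useful-facts}.
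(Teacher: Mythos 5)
Your proof is correct, but the left-to-right implication takes a genuinely different route from the paper's. For that direction the paper stays with an $\omega$-indexed $C$-indiscernible sequence through $b, b'$ inside $N$, uses the strong saturation of $N$ to find a $\lambda_T$-saturated model $M$ with $C \subseteq M \subseteq N$ over which the sequence is indiscernible, deduces $bM \equivls_C b'M$, applies Ls-invariance to the (infinite) tuples $bM, b'M$, and then converts $\alpha b \equivls_M \alpha b'$ into $\d_C(\alpha b, \alpha b') \leq 2$ via \thref{useful-facts}(ii); the auxiliary saturated submodel is exactly where the $(2^{|C|+\lambda_T})^+$-saturation is spent. You instead densify the sequence to a $\mathbb{Q}$-indexed $C$-indiscernible sequence in $N$ containing $b, b'$, prove (by the one-coordinate-at-a-time ``finger dance'') that same-length increasing tuples from it are Lascar-equivalent over $C$, and then use Ls-invariance applied to these finite tuples of $N$ to conclude the whole sequence is $C\alpha$-indiscernible, giving even $\d_C(\alpha b, \alpha b') \leq 1$. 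This avoids the auxiliary $\lambda_T$-saturated model and Fact \thref{useful-facts}(ii) altogether, and for this direction needs much less saturation of $N$ (just enough to fit a countable sequence over $Cbb'$), at the cost of carrying out the standard but not-completely-trivial lemma on Lascar-equivalence of increasing tuples, which you sketch correctly. Your right-to-left implication (chain of length $n$ in $N$ by saturation, $\Sigma$ giving $\d_C \leq 2$ at each step, and reduction of the infinite-tuple statement $\d_C(\alpha b, \alpha b') \leq 2n$ to finite subtuples by thickness) is essentially identical to the paper's, and your preliminary remark that the property depends only on $q$ is a harmless addition the paper leaves implicit.
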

\begin{proof}
Let $q(x)$ be a $C$-Ls-invariant type over $N$ and let $\alpha \models q$. Let $b, b' \in N$ be finite tuples such that $\d_C(b, b') \leq 1$. Then there is a $C$-indiscernible sequence $(b_i)_{i < \omega}$ with $b_0 b_1 = bb'$, which we may assume to be in $N$ by saturation. Using saturation again, we find a $\lambda_T$-saturated $C \subseteq M \subseteq N$ such that $(b_i)_{i < \omega}$ is $M$-indiscernible. In particular this means that $bM \equivls_C b'M$ and so $\alpha bM \equivls_C \alpha b' M$. It follows that $\alpha b \equivls_M \alpha b'$ and thus by our choice of $M$ we get $\d_C(\alpha b, \alpha b') \leq 2$. As $b, b'$ were arbitrary, we conclude that $\models \Sigma(\alpha)$.

For the other direction we let $q(x)$ be a type over $N$ such that for $\alpha \models q$ we have $\models \Sigma(\alpha)$. Now let $d, d' \in N$ be (potentially infinite tuples) such that $d \equivls_C d'$. Let $n < \omega$ be such that $\d_C(d, d') \leq n$, we claim that $\d_C(\alpha d, \alpha d') \leq 2n$, which implies the required $\alpha d \equivls_C \alpha d'$. By thickness we have that the partial type $\d_C(\alpha d, \alpha d') \leq 2n$ is given by
\[
\bigcup \{ \d_C(\alpha b, \alpha b') \leq 2n : b \subseteq d \text{ and } b' \subseteq d' \text{ are finite matching tuples}\}.
\]
So we have reduced the problem to the case where $d$ and $d'$ are finite. By saturation then there are $d = d_0, d_1, \ldots, d_n = d'$ in $N$ such that $\d_C(d_i, d_{i+1}) \leq 1$ for all $0 \leq i < n$. By assumption we thus have that $\d_C(\alpha d_i, \alpha d_{i+1}) \leq 2$ for all $0 \leq i < n$. We conclude that $\d_C(\alpha d, \alpha d') \leq 2n$, as required.
\end{proof}
\begin{proposition}[Extending Ls-invariant types]
\thlabel{extend-ls-invariant-type}
Let $T$ be a thick theory. Let $N \supseteq C$ be $(2^{|C| + \lambda_T})^+$-saturated. Suppose that $p(x) = \tp(a/N)$ is a $C$-Ls-invariant type, then $p(x)$ extends to a unique global $C$-Ls-invariant type $q(x)$.
\end{proposition}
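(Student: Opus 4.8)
The plan is to deduce uniqueness from \thref{useful-facts}(iii) and to construct the extension by an averaging (coheir-like) recipe relative to $p$, reading off $C$-Ls-invariance from \thref{ls-invariance-type-definable}. For uniqueness, observe that if $q_1,q_2$ are global $C$-Ls-invariant types with $q_i\supseteq p$, then $q_1|_N=p=q_2|_N$ (as $p$ is a complete type over $N$), so $q_1=q_2$ by \thref{useful-facts}(iii), using that $N$ is $(2^{|C|+\lambda_T})^+$-saturated.

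For existence I would first fix, exactly as in the proof of \thref{ls-invariance-type-definable}, a $\lambda_T$-saturated model $M_0$ with $C\subseteq M_0\subseteq N$ of size at most $2^{|C|+\lambda_T}$. Then, by saturation of $N$, every finite tuple $c$ from $\MM$ has some matching $c'\in N$ realising $\tp(c/M_0)$, and any such $c'$ satisfies $\d_C(c,c')\leq\d_{M_0}(c,c')\leq2$ by \thref{useful-facts}(ii), hence $c'\equivls_C c$. Define $q$ by: for a positive $\phi(x,y)$ and finite $c$ from $\MM$, put $\phi(x,c)\in q$ iff $\phi(x,c')\in p$ for some matching $c'\in N$ with $c'\equivls_C c$. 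The $C$-Ls-invariance of $p$ over $N$ shows this is independent of the choice of $c'$: if $c',c''\in N$ both $\equivls_C c$ then $c'\equivls_C c''$, so $ac'\equivls_C ac''$ for $a\models p$, and hence $\models\phi(a,c')$ iff $\models\phi(a,c'')$. Thus ``for some $c'$'' may be replaced by ``for every $c'$'', and in particular $q|_N=p$.

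Next I would check that $q$ is a (complete) global type. For consistency, given $\phi_1(x,c_1),\dots,\phi_m(x,c_m)\in q$, realise $\tp(c_1\cdots c_m/M_0)$ in $N$ by a \emph{single} tuple $c_1^*\cdots c_m^*$; projecting to each block gives $c_i^*\equivls_C c_i$, so $\phi_i(x,c_i^*)\in p$ for all $i$, and then consistency of $p$ together with $c_1^*\cdots c_m^*\equiv_{M_0}c_1\cdots c_m$ gives $\models\exists x\,\bigwedge_i\phi_i(x,c_i)$. It is essential that the full parameter tuple is transferred into $N$ at once: moving the $c_i$ separately does not control their joint Lascar strong type, and moving $b,b'$ by an automorphism fixing $C$ is of no use, since such an automorphism displaces $N$ and so cannot be combined with the requirement $\phi(x,c')\in p=\tp(a/N)$ for the fixed $a$. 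Completeness uses that $p$ is a complete positive type: if $\phi(x,c)\notin q$ and $c'\in N$ with $c'\equivls_C c$, then $\phi(x,c')\notin p$, so there is a positive $\psi(x,y)$ with $\psi(x,c')\in p$ and $\{\phi(x,c'),\psi(x,c')\}$ inconsistent with $T$; this inconsistency persists with $c$ in place of $c'$ since $c\equiv_C c'$, while $\psi(x,c)\in q$ by construction. Hence $q$ is a global type extending $p$.

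Finally, to see that $q$ is $C$-Ls-invariant I would apply \thref{ls-invariance-type-definable} with $\MM$ in the role of $N$: it suffices to show $\d_C(\alpha b,\alpha b')\leq2$ for $\alpha\models q$ and all finite $b,b'$ with $\d_C(b,b')\leq1$, which by thickness means $\delta(z,b,b')\in q$ for each $\delta$ in a partial type over $C$ defining ``$\d_C(zb,zb')\leq2$''. Realising $\tp(bb'/M_0)$ in $N$ by $b^*b'^*$ gives $\d_C(b^*,b'^*)\leq1$ (this relation is type-definable over $C$, hence preserved under $\equiv_C$) and $b^*b'^*\equivls_C bb'$ as above, so \thref{ls-invariance-type-definable} applied to $p$ (which is $C$-Ls-invariant over $N$) yields $\d_C(ab^*,ab'^*)\leq2$, i.e.\ $\delta(z,b^*,b'^*)\in p$, and therefore $\delta(z,b,b')\in q$ by the definition of $q$. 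The main obstacle is thus not model-theoretic but lies in the positive-logic bookkeeping, above all in verifying that the averaging recipe yields a \emph{maximal} positive type, which is where completeness of $p$ is exploited; once that is in place, everything else is extracted from \thref{ls-invariance-type-definable} and \thref{useful-facts}.
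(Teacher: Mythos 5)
Your proposal is correct, but its existence part is organised differently from the paper's. You construct the extension explicitly, coheir-style: transfer an arbitrary finite parameter tuple $c$ from the monster into $N$ along a copy $c'$ with $c'\equivls_C c$ (obtained by realising $\tp(c/M_0)$ for a fixed $\lambda_T$-saturated $C\subseteq M_0\subseteq N$ and using \thref{useful-facts}(ii)), declare $\phi(x,c)\in q$ iff $\phi(x,c')\in p$, and then verify by hand that $q$ is consistent, maximal (via the positive-logic denial property of the e.c.\ monster) and, through \thref{ls-invariance-type-definable}, $C$-Ls-invariant. The paper instead shows that $p(x)\cup\Sigma(x)$ is finitely satisfiable, where $\Sigma$ is the global partial type from \thref{ls-invariance-type-definable} expressing $C$-Ls-invariance: the finitely many parameter pairs are copied into $N$ over $Ce$, invariance of $p$ is applied there, and one pulls back by choosing a suitable $a^*$; the desired $q$ is then simply the type of a realisation of $p\cup\Sigma$, so completeness comes for free and no maximality check is needed. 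Both arguments rest on the same ingredients (saturation of $N$, \thref{ls-invariance-type-definable}, \thref{useful-facts}(ii)--(iii)), and the uniqueness argument is identical; what your route buys is an explicit, canonical description of $q$ (which makes the uniqueness transparent), at the cost of exactly the positive-logic bookkeeping you flag — maximality via denials and the joint transfer of parameter tuples. Two points you gloss over are fine but worth a line in a write-up: that a $\lambda_T$-saturated $M_0$ with $C\subseteq M_0\subseteq N$ and $|M_0|\leq 2^{|C|+\lambda_T}$ can indeed be found \emph{inside} $N$ (realise the type over $C$ of such a model in $N$ by saturation), and that formulas $\delta$ over $C$ occurring in the invariance check pose no problem since $\equivls_C$ fixes $C$ pointwise, so $C$-parameters are preserved by your transfer.
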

\begin{proof}
Let $\Sigma(x)$ be the global partial type from \thref{ls-invariance-type-definable} expressing $C$-Ls-invariance. We will show that $p(x) \cup \Sigma(x)$ is finitely satisfiable. So let $\phi(x, e) \in p(x)$, where $e$ is a tuple of parameters from $N$, and let $\Sigma_0(x) \subseteq \Sigma(x)$ be finite. Let $b_1, \ldots, b_n$ and $b'_1, \ldots, b'_n$ be the finite tuples that occur in $\Sigma_0(x)$, so $\d_C(b_i, b'_i) \leq 1$ for all $1 \leq i \leq n$. By saturation of $N$ we find $d_1, \ldots, d_n, d'_1, \ldots, d'_n \in N$ such that $d_1 \ldots d_n d'_1 \ldots d'_n \equiv_{Ce} b_1 \ldots b_n b'_1 \ldots b'_n$. So for all $1 \leq i \leq n$ we have $\d_C(d_i, d'_i) \leq 1$, and hence $\d_C(a d_i, a d'_i) \leq 2$ by \thref{ls-invariance-type-definable} applied to $p(x)$. Now let $a^*$ be such that $a d_1 \ldots d_n d'_1 \ldots d'_n \equiv_{Ce} a^* b_1 \ldots b_n b'_1 \ldots b'_n$. Then by construction we have that $\models \phi(a^*, e)$ and $\models \Sigma_0(a^*)$, which proves finite satisfiability of $p(x) \cup \Sigma(x)$. By compactness we then find a realisation $\alpha$ of $p(x) \cup \Sigma(x)$, so that $q(x) = \tp(\alpha/\MM)$ is our desired $C$-Ls-invariant type. The uniqueness claim follows from \thref{useful-facts}(iii).
\end{proof}
We recall from \cite[Definition 3.12]{dobrowolski_kim-independence_2022} that $a \ind^{iLs}_C b$ means that $\tp(a/Cb)$ extends to a global $C$-Ls-invariant type.
\begin{proposition}
\thlabel{indiscernible-invariant-independent-sequence-is-morley}
Let $T$ be a thick theory. If $(a_i)_{i < \omega}$ is a $C$-indiscernible sequence such that $a_i \ind^{iLs}_C a_{<i}$ for all $i < \omega$ then $(a_i)_{i < \omega}$ is a Morley sequence in some global $C$-Ls-invariant type.
\end{proposition}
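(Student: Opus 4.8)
The plan is to recover from the sequence a single global $C$-Ls-invariant type $q$ with $a_i \models q|_{Ca_{<i}}$ for every $i < \omega$, and then recognise $(a_i)_{i<\omega}$ as a Morley sequence in $q$. The key structural observation is that $C$-indiscernibility makes the types $\tp(a_i/Ca_{<i})$ increasing in $i$: for $i \leq j < \omega$ the tuples $(a_0, \ldots, a_{i-1}, a_j)$ and $(a_0, \ldots, a_{i-1}, a_i)$ are increasing, hence have the same type over $C$, so $\tp(a_i/Ca_{<i}) \subseteq \tp(a_j/Ca_{<j})$. Thus $r(x) := \bigcup_{i < \omega} \tp(a_i/Ca_{<i})$ is a consistent (directed) partial type over $Ca_{<\omega}$ --- one may think of it as the type of the limiting element $a_\omega$ of a one-step $C$-indiscernible extension of the sequence. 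Any global $C$-Ls-invariant type $q$ with $r(x) \subseteq q$ automatically satisfies $q|_{Ca_{<i}} = \tp(a_i/Ca_{<i})$ for each $i$, since both are maximal types over $Ca_{<i}$ containing $\tp(a_i/Ca_{<i})$; so it is enough to produce such a $q$.

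To do this, fix a sufficiently saturated model $N \supseteq Ca_{<\omega}$ as provided by \thref{useful-facts}(i), and let $\Sigma(x)$ be the partial type over $N$ from \thref{ls-invariance-type-definable} expressing $C$-Ls-invariance. I will check that $r(x) \cup \Sigma(x)$ is finitely satisfiable; by compactness it is then realised by some $\alpha$, and $\tp(\alpha/N)$, which extends $r(x)$ and is $C$-Ls-invariant by \thref{ls-invariance-type-definable}, extends by \thref{extend-ls-invariant-type} to a global $C$-Ls-invariant type $q \supseteq r(x)$, as wanted. For the finite satisfiability it suffices, given finite $e \subseteq Ca_{<\omega}$ and finitely many pairs $b_l, b'_l \in N$ ($l = 1, \ldots, m$) of finite tuples with $\d_C(b_l, b'_l) \leq 1$, to realise $\phi(x, e) \wedge \bigwedge_{l=1}^m \d_C(x b_l, x b'_l) \leq 2$ whenever $\phi(x, e) \in r(x)$. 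As $e$ is finite it lies in $Ca_{<i}$ for some $i < \omega$, and then $\phi(x, e) \in \tp(a_i/Ca_{<i})$ by the nestedness above. This is the only point where the hypothesis enters: since $a_i \ind^{iLs}_C a_{<i}$, there is a global $C$-Ls-invariant type $p_i$ extending $\tp(a_i/Ca_{<i})$. A realisation $\beta$ of $p_i|_N$ then satisfies $\phi(x, e)$, and as $\tp(\beta/N) = p_i|_N$ is a restriction of a $C$-Ls-invariant type, hence $C$-Ls-invariant, it satisfies $\Sigma(x)$ by \thref{ls-invariance-type-definable}; in particular $\d_C(\beta b_l, \beta b'_l) \leq 2$ for all $l$, so $\beta$ is as required.

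It remains to pass from the condition $a_i \models q|_{Ca_{<i}}$ (for all $i < \omega$) to the conclusion that $(a_i)_{i<\omega}$ is a Morley sequence in $q$ over $C$; this I intend to do by quoting the basic theory of Morley sequences in $C$-Ls-invariant types from \cite{dobrowolski_kim-independence_2022} --- concretely, that any sequence whose $i$-th term realises $q|_{Ca_{<i}}$ has the same Lascar strong type over $C$ as a realisation of $q^{\otimes\omega}$. I expect this last step to require the most care in locating the precise statement to cite. The conceptual heart of the argument is the finite-satisfiability step of the second paragraph, which says that the property $\ind^{iLs}_C$ persists to the limit of the sequence, precisely because a single basic formula involves only finitely many of the $a_i$. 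This is what lets us avoid the more naive strategy of stretching the sequence to great length and applying a pigeonhole to the (boundedly many) global $C$-Ls-invariant types, which would confront us with the delicate question of whether an $\omega$-subsequence of a $C$-indiscernible sequence retains the Lascar strong type over $C$ of the whole.
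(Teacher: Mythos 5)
Your first two paragraphs are essentially the first half of the paper's own argument: your $r(x)=\bigcup_{i<\omega}\tp(a_i/Ca_{<i})$ is exactly $\tp(a_\omega/Ca_{<\omega})$ for a limit point $a_\omega$ added by compactness, and the finite-satisfiability argument for $r(x)\cup\Sigma(x)$, using $a_i\ind^{iLs}_C a_{<i}$ together with \thref{ls-invariance-type-definable} and \thref{extend-ls-invariant-type}, is the same as in the paper. The gap is the step you defer to a citation. In this setting a Morley sequence in $q$ over $C$ is by definition a sequence that is $\equivls_C$-equivalent to a realisation of $q^{\otimes\omega}$, and the condition ``$a_i\models q|_{Ca_{<i}}$ for all $i$'' controls only types, not Lascar strong types. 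The statement you intend to quote is not in \cite{dobrowolski_kim-independence_2022} and is false as stated: already for $i=0$, a realisation of $q|_C$ need not be $\equivls_C$-equivalent to any realisation of $q$, since over an arbitrary set $C$ (or a non-$\lambda_T$-saturated model) the type does not determine the Lascar strong type --- for instance, in the first-order (hence thick) theory of an equivalence relation with exactly two infinite classes, the unique $1$-type over $\emptyset$ is realised in both classes, while any global $\emptyset$-Ls-invariant extension concentrates on one class, and elements of different classes are not Lascar equivalent. With your $q$ you only know that its realisations have the same \emph{type} over $Ca_{<\omega}$ as $a_\omega$, and that is not enough to reach the $\equivls_C$ required by the definition.

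This is precisely what the extra conjugation step in the paper's proof is for, and it is the second, Lascar-level, use of the indiscernibility hypothesis, which your argument uses only for the nestedness of the types $\tp(a_i/Ca_{<i})$. Having realised $p(x)\cup\Sigma(x)$ by $\alpha^*$, one picks $a^*$ in the monster with $a^*\equivls_{Ca_{<\omega}}\alpha^*$ and conjugates $q^*=\tp(\alpha^*/\MM)$ by an automorphism over $Ca_{<\omega}$ sending $a^*$ to $a_\omega$; the resulting global $C$-Ls-invariant type $q$ then has the stronger property that \emph{every} $\alpha\models q$ satisfies $\alpha\equivls_{Ca_{<\omega}}a_\omega$, not merely $\alpha\equiv_{Ca_{<\omega}}a_\omega$. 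Combined with the fact that $a_i$ and $a_\omega$ lie on a common $Ca_{<i}$-indiscernible sequence, so that $a_i\equivls_{Ca_{<i}}a_\omega\equivls_{Ca_{<i}}\alpha$, this yields that $(a_i)_{i<\omega}$ is Lascar-equivalent over $C$ to a Morley sequence, hence is a Morley sequence in some (possibly conjugated) global $C$-Ls-invariant type --- note that the freedom to change the type at the end is needed exactly because only an automorphism over $C$, not a Lascar strong one, is applied. Your construction can be repaired by inserting this conjugation step (and working with an actual realisation $a_\omega$ of $r(x)$ extending the sequence), after which it coincides with the paper's proof; as written, the concluding step is a genuine gap.
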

\begin{proof}
By compactness we find $a_\omega$ such that $(a_i)_{i \leq \omega}$ is $C$-indiscernible. Set $p(x) = \tp(a_\omega/C a_{< \omega})$ and let $\Sigma(x)$ be the global partial type from \thref{ls-invariance-type-definable}. We claim that $p(x) \cup \Sigma(x)$ is consistent. Indeed, for any finite $p'(x) \subseteq p(x)$ there is some $i < \omega$ so that $p'(x)$ only contains parameters in $C a_{<i}$, and so $\models p'(a_i)$ by $C$-indiscernibility. As $a_i \ind^{iLs}_C a_{<i}$ we then have that $p'(x)$ extends to a global $C$-Ls-invariant type $q'(x)$, and any realisation of $q'(x)$ will then be a realisation of $p'(x) \cup \Sigma(x)$. So $p(x) \cup \Sigma(x)$ is finitely satisfiable and hence consistent.

Let $\alpha^*$ be a realisation of $p(x) \cup \Sigma(x)$ and set $q^*(x) = \tp(\alpha^*/\MM)$, so $q^*(x)$ is global $C$-Ls-invariant. Let $a^* \equivls_{C a_{< \omega}} \alpha^*$, then there is $f \in \Aut(\MM/C a_{< \omega})$ such that $f(a^*) = a_\omega$. Set $q = f(q^*)$, so $q(x)$ is global $C$-Ls-invariant by \cite[Lemma 3.8(i)]{dobrowolski_kim-independence_2022} with $p(x) \subseteq q(x)$ and for any $\alpha \models q$ we have $\alpha \equivls_{C a_{< \omega}} a_\omega$.

For any $i < \omega$ we thus have $a_i \equivls_{C a_{<i}} a_\omega \equivls_{C a_{<i}} \alpha$. We therefore have $a_i \models q^{\otimes i}|_C$ for all $i < \omega$ and so $(a_i)_{i < \omega} \models q^{\otimes \omega}|_C$. So $(a_i)_{i < \omega}$ is the automorphic image over $C$ of a Morley sequence over $C$, hence it is itself a Morley sequence in a (potentially different) global $C$-Ls-invariant type.
\end{proof}

\section{Spread out trees}
\label{sec:spread-out-trees}
We recall various definitions concerning trees and trees of parameters (which we will from now on also simply call trees) from \cite{kaplan_kim-independence_2020}. In particular, we will work with the ill-founded trees $\T_\alpha$ from \cite[Definition 5.1]{kaplan_kim-independence_2020} and we use the same notation, so we assume familiarity with those definitions. We refer to \cite{kamsma_positive_2023} for the definitions and terminology involving s-indiscernibility, str-indiscernibility and generalised $\EM$-types. We slightly adjust \cite[Definition 5.7]{kaplan_kim-independence_2020} to fit our situation.
\begin{definition}
\thlabel{spread-out-tree}
Let $(a_\eta)_{\eta \in \T_\alpha}$ be a tree and let $M$ be an e.c.\ model.
\begin{enumerate}[label=(\roman*)]
\item We call $(a_\eta)_{\eta \in \T_\alpha}$ \emph{spread out over $M$} if for all $\eta \in \T_\alpha$ with $\dom(\eta) = [\beta + 1, \alpha)$ for some $\beta < \alpha$, there is a global $M$-Ls-invariant type $q_\eta \supseteq \tp(a_{\unrhd \eta^\frown \langle 0 \rangle}/M)$ such that $(a_{\unrhd \eta^\frown \langle i \rangle})_{i < \omega}$ is a Morley sequence in $q_\eta$ over $M$.
\item A \emph{Morley tree over $M$} is an str-indiscernible and spread out tree over $M$.
\item A \emph{tree Morley sequence over $M$} is a branch in an infinite height Morley tree over $M$.
\end{enumerate}
\end{definition}
\begin{lemma}
\thlabel{sub-tree-morley-sequences}
Suppose that $(a_i)_{i < \omega}$ is a tree Morley sequence over $M$.
\begin{enumerate}[label=(\roman*)]
\item If $b_i \subseteq a_i$ for each $i < \omega$, of matching length and position, then $(b_i)_{i < \omega}$ is a tree Morley sequence over $M$.
\item Fix $1 \leq n < \omega$ and define $d_i = (a_{n i}, \ldots, a_{n i + n - 1})$ for all $i < \omega$. Then $(d_i)_{i < \omega}$ is a tree Morley sequence over $M$.
\end{enumerate}
\end{lemma}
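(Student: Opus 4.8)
The plan is to unpack the definition of tree Morley sequence in both parts and check that the required structure survives the operation, using the characterisation of Morley sequences via Ls-invariant types from \thref{indiscernible-invariant-independent-sequence-is-morley} when convenient. For part (i), I would start with an infinite height Morley tree $(a_\eta)_{\eta \in \T_\omega}$ over $M$ witnessing that $(a_i)_{i < \omega}$ is a tree Morley sequence, say $a_i = a_{\zeta_i}$ along some branch. Taking the corresponding subtuples $b_\eta \subseteq a_\eta$ of matching length and position at every node $\eta$, I claim $(b_\eta)_{\eta \in \T_\omega}$ is again a Morley tree over $M$. Str-indiscernibility is inherited because any $s$-isomorphism of finite subtrees that fixes the types of the $a$'s also fixes the types of the coordinate-projections $b$'s (restriction of a type to a subtuple of variables commutes with the relevant automorphisms). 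For spread out: given $\eta$ with $\dom(\eta) = [\beta+1,\omega)$, we have a global $M$-Ls-invariant $q_\eta \supseteq \tp(a_{\unrhd \eta^\frown\langle 0\rangle}/M)$ with $(a_{\unrhd\eta^\frown\langle i\rangle})_{i<\omega}$ Morley in it. Pushing $q_\eta$ forward along the coordinate projection (which is $M$-definable as a partial map on tuples, so preserves Ls-invariance by the usual argument, or invoke \thref{ls-invariance-type-definable}) yields a global $M$-Ls-invariant type $q'_\eta \supseteq \tp(b_{\unrhd\eta^\frown\langle 0\rangle}/M)$; and the image of a Morley sequence in $q_\eta$ under this projection is a Morley sequence in $q'_\eta$ by \thref{basic-definitions}(v) since the projection takes $q_\eta^{\otimes\omega}|_M$-realisations to $(q'_\eta)^{\otimes\omega}|_M$-realisations and commutes with $\equivls_M$. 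Then the branch of $(b_\eta)$ through $(\zeta_i)$ is exactly $(b_i)_{i<\omega}$, which is therefore a tree Morley sequence.

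For part (i) there is the standard subtlety of \emph{whether} the projected tree still "lives on $\T_\omega$" — it does, as the index set is untouched and only the tuples at each node shrink — and whether str-indiscernibility is literally preserved under passing to subtuples; I expect this to be routine but worth a sentence, citing that the generalised $\EM$-type of the projected tree is the projection of the generalised $\EM$-type of the original. I would phrase it as: the reduct of an str-indiscernible tree to a sub-language of the tuple-sort is str-indiscernible.

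For part (ii), fix the infinite height Morley tree $(a_\eta)_{\eta\in\T_\omega}$ over $M$ with $(a_i)_{i<\omega}$ a branch, say through $\zeta_i \in \T_\omega$. I would build a new tree of height $\omega$ indexed so that its nodes correspond to "blocks of $n$" in the old tree: intuitively re-coordinatise $\T_{\omega}$ via the order isomorphism $\omega \to \omega$, $i \mapsto ni$, i.e. take the subtree of $\T_\omega$ on levels $\{0, n, 2n, \ldots\}$ but at each surviving node $\eta$ record the concatenation $(a_\eta, a_{\eta^\frown\langle ?\rangle}, \ldots)$ of the $n$ consecutive nodes along the relevant part of the branch structure. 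The cleanest route is probably to first observe, using \thref{indiscernible-invariant-independent-sequence-is-morley}, that it suffices to produce an $M$-indiscernible sequence $(d_i)_{i<\omega}$ with $d_i \ind^{iLs}_M d_{<i}$ arising as a branch in \emph{some} Morley tree; and that the "block" sequence $d_i = (a_{ni},\ldots,a_{ni+n-1})$ is $M$-indiscernible because $(a_i)_{i<\omega}$ is, and satisfies $d_i \ind^{iLs}_M d_{<i}$ because tree Morley sequences are "Morley" in the sense that each $a_j \ind^{iLs}_M a_{<j}$ (a branch of a spread out str-indiscernible tree is a Morley sequence in an $M$-Ls-invariant type by the preceding development, hence has this property, and $d_i$ is a finite tuple of consecutive such $a_j$'s so $\ind^{iLs}_M$ over $Md_{<i}$ follows by transitivity/finite-character of $\ind^{iLs}$). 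Actually, to be safe, I would instead directly re-build the Morley tree: define $\T^{(n)}$ with the same underlying set $\T_\omega$ but where level-$\ell$ of the new tree is level-$(n\ell)$ of the old, set $c_\eta = (a_\eta, \text{immediate relevant successors up to depth } n)$ — here one must be careful because a node of $\T_\omega$ has infinitely many successors, so "block" must be taken along the tree structure correctly; concretely, for $\eta$ in the new tree at level $\ell$, let $c_\eta$ concatenate $a_{\eta}$ with $a_{\eta^\frown\langle 0\rangle^\frown\cdots}$ down a fixed $0$-path of length $n-1$ reindexed appropriately, and then the Morley-ness of successor sequences in the new tree follows from that in the old tree composed $n$ steps (a Morley sequence of Morley sequences is a Morley sequence, using associativity of $q^{\otimes\omega}$, or just re-derive via \thref{indiscernible-invariant-independent-sequence-is-morley}).

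The main obstacle I anticipate is part (ii): getting the bookkeeping of the tree re-indexing exactly right so that the new tree genuinely satisfies \thref{spread-out-tree}(i) — i.e. exhibiting the global $M$-Ls-invariant types $q_\eta$ for the new tree in terms of iterated products of the old ones — and confirming str-indiscernibility of the re-indexed/grouped tree transfers from the old one. I expect the cleanest write-up sidesteps explicit tree surgery by using \thref{indiscernible-invariant-independent-sequence-is-morley} to reduce "is a tree Morley sequence" to the conjunction of "$M$-indiscernible" and "satisfies $\ind^{iLs}_M$ along itself, witnessed inside a tree" — but I need to double-check that this reduction is available at this point in the paper, i.e. that tree Morley sequences have been (or can here be) characterised this way; if not, I will do the direct tree construction for both parts.
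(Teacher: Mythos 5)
Part (i) of your proposal is fine and matches the paper's (essentially one-line) argument: passing to subtuples of matching position at every node of the witnessing Morley tree preserves str-indiscernibility and the spread-out condition (the witnessing invariant types are simply restricted to the corresponding subtuples of variables), so the subtree is again a Morley tree whose relevant branch is $(b_i)_{i<\omega}$. For part (ii), however, your preferred route (A) has a genuine gap: it rests on the claim that the branch elements of a Morley tree satisfy $a_j \ind^{iLs}_M a_{<j}$, ``since a branch of a spread out str-indiscernible tree is a Morley sequence in an $M$-Ls-invariant type by the preceding development''. That is not part of the development and is false in general. The spread-out condition only supplies invariant types governing the sequences of \emph{sibling} subtrees $(a_{\unrhd \eta^\frown\langle i\rangle})_{i<\omega}$ at each node; it says nothing about invariance \emph{along} a branch, whose members are pairwise comparable in the tree order. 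If every branch of a Morley tree were an invariant Morley sequence (which is what your claim amounts to, via \thref{indiscernible-invariant-independent-sequence-is-morley}), then tree Morley sequences would be no more general than invariant Morley sequences and Kim's lemma for tree Morley sequences (\thref{kims-lemma-tree-morley-sequences}) would be redundant; the point of the notion is precisely that it is a wider class. Moreover, even granting the premise, \thref{indiscernible-invariant-independent-sequence-is-morley} would only give that $(d_i)_{i<\omega}$ is an invariant Morley sequence; the paper nowhere records that invariant Morley sequences are tree Morley sequences, so the reduction you hope for is not available at this point, and the ``transitivity of $\ind^{iLs}$'' you invoke to pass from single steps to blocks is likewise not established.

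Your fallback route (B) is what the paper actually does: the $n$-fold elongation of the witnessing Morley tree (as in Chernikov--Ramsey), and the bookkeeping you flag as the main obstacle is exactly where the content lies --- but it is lighter than you anticipate. One defines $j:\T_\omega\to\T_\omega$ explicitly by sending $\eta$ with $\dom(\eta)=[k,\omega)$ to a node with $\dom(j(\eta))=[nk+n-1,\omega)$ which copies the values of $\eta$ onto every $n$-th coordinate and fills the remaining coordinates with $0$, and sets $c_\eta=(b_{j(\eta)},\ldots,b_{j(\eta)^\frown\langle 0\rangle^{n-1}})$. The key computation is that $j(\eta^\frown\langle i\rangle)=\bigl(j(\eta)^\frown\langle 0\rangle^{n-1}\bigr)^\frown\langle i\rangle$, i.e.\ the children of $\eta$ in the new tree attach at the bottom node of the block $c_\eta$; consequently each new sibling sequence $(c_{\unrhd\eta^\frown\langle i\rangle})_{i<\omega}$ is just a re-tupling of subtuples of one old sibling Morley sequence, hence Morley in the restriction of the same invariant type. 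So no ``Morley sequence of Morley sequences'', no product of invariant types and no associativity of $\otimes$ is needed (the paper proves no such lemmas, so leaning on them would create further gaps). Str-indiscernibility transfers because $j$ respects str-quantifier-free types, and the zero branch of $(c_\eta)_{\eta\in\T_\omega}$ is $(a_{ni+n-1},\ldots,a_{ni})_{i<\omega}$, i.e.\ $d_i$ with its coordinates reversed, which is then handled as in (i).
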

\begin{proof}
This is essentially \cite[Lemma 5.9]{kaplan_kim-independence_2020}, but we work with slightly different definitions, so we go through the proof here. Part (i) is clear, because being a Morley tree is preserved under taking subtuples. For (ii) we let $(b_\eta)_{\eta \in \T_\omega}$ be a Morley tree such that $(a_i)_{i < \omega}$ is a branch in $(b_\eta)_{\eta \in \T_\omega}$. We may assume that $(a_i)_{i < \omega}$ is the branch indexed by the constant zero functions. We define $j: \T_\omega \to \T_\omega$ so that for $\eta \in \T_\omega$ with $\dom(\eta) = [k, \omega)$ we have $\dom(j(\eta)) = [nk+n-1, \omega)$ and
\[
j(\eta)(m) = \begin{cases}
\eta((m-(n-1))/n) & \text{if } n | (m-(n-1)), \\
0 & \text{otherwise},
\end{cases}
\]
for all $m \in [nk + n - 1, \omega)$. We define $(c_\eta)_{\eta \in \T_\omega}$ by $c_\eta = (b_{j(\eta)}, \ldots, b_{j(\eta)^\frown \langle 0 \rangle^{n-1}})$. This corresponds to the $n$-fold elongation of $(b_\eta)_{\eta \in \T_\omega}$ from \cite{chernikov_model-theoretic_2016}. One then straightforwardly verifies that $(c_\eta)_{\eta \in \T_\omega}$ is a Morley tree over $M$, so $(c_{\zeta_i})_{i < \omega}$ is a tree Morley sequence over $M$. For $i < \omega$ we have
\[
c_{\zeta_i} = (b_{\zeta_{ni+n-1}}, \ldots, b_{\zeta_{ni}}) = (a_{ni+n-1}, \ldots, a_{ni}),
\]
so by reversing the order of the tuples we see that $(d_i)_{i < \omega}$ is a tree Morley sequence over $M$.
\end{proof}
\begin{lemma}[Kim's lemma for tree Morley sequences]
\thlabel{kims-lemma-tree-morley-sequences}
Let $T$ be a thick NSOP$_1$ theory. Let $M$ be an e.c.\ model and let $\Sigma(x, b)$ be a partial type over $M$. Then the following are equivalent:
\begin{enumerate}[label=(\roman*)]
\item $\Sigma(x, b)$ Kim-divides over $M$;
\item for some tree Morley sequence $(b_i)_{i < \omega}$ over $M$ with $b_0 = b$ we have that $\bigcup_{i < \omega} \Sigma(x, b_i)$ is inconsistent;
\item for every tree Morley sequence $(b_i)_{i < \omega}$ over $M$ with $b_0 = b$ we have that $\bigcup_{i < \omega} \Sigma(x, b_i)$ is inconsistent.
\end{enumerate} 
\end{lemma}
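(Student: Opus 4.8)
The plan is to prove the cycle (i) $\Rightarrow$ (iii) $\Rightarrow$ (ii) $\Rightarrow$ (i), where (iii) $\Rightarrow$ (ii) is trivial once we know tree Morley sequences over $M$ exist, and (ii) $\Rightarrow$ (i) should follow from the fact that a tree Morley sequence is in particular an $M$-indiscernible sequence that is ``invariant'' enough to witness Kim-dividing --- concretely, a branch in a Morley tree is, after reindexing, a Morley sequence in a global $M$-Ls-invariant type (this is the tree-analogue of the fact that tree Morley sequences are Morley sequences, and should be extractable from \thref{indiscernible-invariant-independent-sequence-is-morley} together with spread-out-ness), so inconsistency of $\bigcup_i \Sigma(x,b_i)$ along it gives Kim-dividing by the definition of Kim-dividing via Morley sequences in invariant types.

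The substantive direction is (i) $\Rightarrow$ (iii). First I would recall the standard reduction: if $\Sigma(x,b)$ Kim-divides over $M$, then it Kim-divides as witnessed by \emph{some} Morley sequence in a global $M$-Ls-invariant type starting with $b$; by Kim's lemma for Kim-dividing in thick NSOP$_1$ theories (available from \cite{dobrowolski_kim-independence_2022}), this is equivalent to Kim-dividing being witnessed by \emph{every} such Morley sequence. So it suffices to take an arbitrary tree Morley sequence $(b_i)_{i<\omega}$ over $M$ with $b_0 = b$ and produce, inside it or from it, a Morley sequence in a global $M$-Ls-invariant type along which $\bigcup_i \Sigma(x,b_i)$ stays inconsistent. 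The key point is that a tree Morley sequence, being a branch in a Morley tree $(a_\eta)_{\eta \in \T_\omega}$, gives rise via the spread-out condition to a global $M$-Ls-invariant type $q_{\langle\rangle}$ (taking $\eta$ the root with domain $[0,\omega)$, or rather $\eta$ with $\dom(\eta)=[1,\omega)$ sitting below the top level) in which the level-one tuples form a Morley sequence; combined with str-indiscernibility, the branch itself is then $M$-indiscernible and each initial segment is $\ind^{iLs}_M$-independent from what came before, so \thref{indiscernible-invariant-independent-sequence-is-morley} applies and exhibits $(b_i)_{i<\omega}$ as a Morley sequence in some global $M$-Ls-invariant type. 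Then Kim's lemma (the ``every Morley sequence'' form) forces $\bigcup_i \Sigma(x,b_i)$ to be inconsistent, which is exactly (iii).

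For (ii) $\Rightarrow$ (i): given that $\bigcup_i \Sigma(x,b_i)$ is inconsistent for some tree Morley sequence, use the same observation that this tree Morley sequence is a Morley sequence in a global $M$-Ls-invariant type to conclude directly that $\Sigma(x,b)$ Kim-divides over $M$ by definition. Existence of at least one tree Morley sequence over $M$ starting with a prescribed $b$ --- needed both to make (iii) $\Rightarrow$ (ii) non-vacuous and as input to the (i) $\Rightarrow$ (iii) argument --- should be obtained by the standard construction of a Morley tree: build a spread-out tree level by level using \thref{extend-ls-invariant-type} and the chain condition, then extract an str-indiscernible tree with the same generalised $\EM$-type (this preserves spread-out-ness), following \cite{kaplan_kim-independence_2020} and \cite{kamsma_positive_2023}.

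I expect the main obstacle to be the passage, carried out carefully in the positive thick setting, from ``branch in a Morley tree'' to ``Morley sequence in a global $M$-Ls-invariant type'': one must check that the spread-out data at a single node, together with str-indiscernibility of the whole tree, genuinely yields $b_i \ind^{iLs}_M b_{<i}$ for the branch (the indices along a branch are cofinal decreasing in $\T_\omega$, so some care with how the tree levels are ordered and how the invariant types $q_\eta$ at different nodes fit together is required), so that \thref{indiscernible-invariant-independent-sequence-is-morley} can be invoked. The rest is bookkeeping with Kim's lemma for Kim-dividing and the definitions.
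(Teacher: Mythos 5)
Your overall skeleton (existence of tree Morley sequences plus a single comparison showing that every tree Morley sequence witnesses Kim-dividing, then citing the standard Kim's lemma) matches the paper, but the step you lean on is exactly the one that fails: a branch in a Morley tree is \emph{not}, in general, a Morley sequence in a global $M$-Ls-invariant type, and spread-out-ness does not give you $b_i \ind^{iLs}_M b_{<i}$ along the branch. Being spread out only provides Ls-invariant independence between \emph{sibling} subtrees: at a node $\eta$ it says $a_{\unrhd \eta^\frown\langle i\rangle} \ind^{iLs}_M (a_{\unrhd \eta^\frown\langle j\rangle})_{j<i}$. The elements of the branch $(c_{\zeta_i})_{i<\omega}$ are in ancestor--descendant relation: $c_{\zeta_{<i}}$ lies entirely inside the subtree above the single child $\zeta_i = \zeta_{i+1}^\frown\langle 0\rangle$, so the spread-out data (and str-indiscernibility) say nothing about the type of $c_{\zeta_i}$ over $M c_{\zeta_{<i}}$ being extendable to a global $M$-Ls-invariant type, and \thref{indiscernible-invariant-independent-sequence-is-morley} cannot be invoked. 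Indeed, if every tree Morley sequence were an invariant Morley sequence, tree Morley sequences would be pointless: the whole reason they are introduced (here and in \cite{kaplan_kim-independence_2020}) is that they form a strictly wider class of witnesses — e.g.\ the Zig-Zag lemma (\thref{zig-zag}) produces, from a mere Kim-independence $b \ind^K_M c$, a tree Morley sequence $(b_i,c_i)_{i<\omega}$ whose relation between a node and its predecessors is only controlled by non-Kim-dividing (via the Weak Independence Theorem), not by invariance. So both your (i) $\Rightarrow$ (iii) and your (ii) $\Rightarrow$ (i) rest on an unprovable claim; your own caveat about ``how the invariant types $q_\eta$ at different nodes fit together'' is precisely where the argument breaks.

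The paper's proof avoids this by a genuinely different comparison, which needs an extra NSOP$_1$ input beyond the standard Kim's lemma. Given the Morley tree $(c_\eta)_{\eta\in\T_\omega}$ containing the branch $(c_{\zeta_i})_{i<\omega}$, one looks at the ``just off the branch'' nodes $\eta_i$ (domain $[i,\omega)$, value $1$ at $i$ and $0$ above). For these, spread-out-ness \emph{does} give $c_{\eta_i} \ind^{iLs}_M (c_{\eta_j})_{j<i}$, since for $j<i$ the nodes $\eta_j$ sit above the sibling child $\zeta_{i+1}^\frown\langle 0\rangle$ while $\eta_i$ sits above $\zeta_{i+1}^\frown\langle 1\rangle$; so $(c_{\eta_i})_{i<\omega}$ is an invariant Morley sequence by \thref{indiscernible-invariant-independent-sequence-is-morley}. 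Str-indiscernibility ensures each pair $c_{\zeta_i}, c_{\eta_i}$ starts an indiscernible sequence over $M(c_{\zeta_j},c_{\eta_j})_{j>i}$, and then the NSOP$_1$ lemma \cite[Lemma 5.10]{dobrowolski_kim-independence_2022} transfers inconsistency of $\bigcup_i\Sigma(x,c_{\zeta_i})$ to and from $\bigcup_i\Sigma(x,c_{\eta_i})$; only the latter is then compared with Kim-dividing via the usual Kim's lemma. To repair your proposal you would need to add this comparison step (or something equivalent); the reduction to ``the branch is an invariant Morley sequence'' cannot be made to work.
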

\begin{proof}
This is \cite[Corollary 5.14]{kaplan_kim-independence_2020}, whose proof is really found in \cite[Proposition 5.13]{kaplan_kim-independence_2020}. Our setting requires some minor extra verifications, which we will do below, but the proof is essentially the same.

Given the existence of tree Morley sequences starting with $b$ (\thref{tree-morley-sequences-exist}), the equivalence of these three statements reduces to proving that for any tree Morley sequence $(b_i)_{i < \omega}$ over $M$ with $b_0 = b$ we have that $\Sigma(x, b)$ Kim-divides iff $\bigcup_{i < \omega} \Sigma(x, b_i)$ is inconsistent.

Let $(c_\eta)_{\eta \in \T_\omega}$ be a Morley tree over $M$ such that $(b_i)_{i < \omega}$ is a branch in that tree, which we may assume to be the constant zero branch. For $i < \omega$ define $\eta_i \in \T_\omega$ to be the function with domain $[i, \omega)$ such that
\[
\eta_i(j) = \begin{cases}
1 & \text{if } i = j,\\
0 & \text{otherwise}.
\end{cases}
\]
By str-indiscernibility, the sequences $(c_{\zeta_i})_{i < \omega}$ and $(c_{\eta_i})_{i < \omega}$ are $M$-indiscernible. We claim that $(c_{\eta_i})_{i < \omega}$ is a Morley sequence over $M$ in a global $M$-Ls-invariant type. Indeed, because $(c_\eta)_{\eta \in \T_\omega}$ is spread out over $M$ we have that $c_{\eta_i} \ind^{iLs}_M (c_{\eta_j})_{j < i}$ for all $i < \omega$. So the claim follows from \thref{indiscernible-invariant-independent-sequence-is-morley}. By str-indiscernibility we also have for all $i < \omega$ that $c_{\zeta_i}, c_{\eta_i}$ starts an $M (c_{\zeta_j}, c_{\eta_j})_{j > i}$-indiscernible sequence. So since $T$ is NSOP$_1$ we can apply \cite[Lemma 5.10]{dobrowolski_kim-independence_2022} to conclude that $\bigcup_{i < \omega} \Sigma(x, c_{\zeta_i})$ is inconsistent iff $\bigcup_{i < \omega} \Sigma(x, c_{\eta_i})$ is inconsistent. The former is just $\bigcup_{i < \omega} \Sigma(x, b_i)$, and the latter is inconsistent iff $\Sigma(x, b)$ Kim-divides by Kim's lemma for NSOP$_1$ theories \cite[Proposition 4.4]{dobrowolski_kim-independence_2022}, which concludes the proof.
\end{proof}
\begin{fact}[Tree modelling theorems]
\thlabel{tree-modelling}
Let $T$ be a thick theory.
\begin{enumerate}[label=(\roman*)]
\item Let $(a_\eta)_{\eta \in \T_\alpha}$ be a tree of tuples and let $C$ be any set of parameters, then there is a tree $(b_\eta)_{\eta \in \T_\alpha}$ that is s-indiscernible over $C$ and $\EM_s$-based on $(a_\eta)_{\eta \in \T_\alpha}$ over $C$.
\item Let $C$ be any parameter set, $\kappa$ any cardinal, and let $\lambda = \beth_{(2^{|T| + |C| + \kappa})^+}$. Given any tree $(a_\eta)_{\eta \in \T_\lambda}$ of $\kappa$-tuples that is s-indiscernible over $C$, there is a tree $(b_\eta)_{\eta \in \T_\omega}$ that is str-indiscernible over $C$ str-based on $(a_\eta)_{\eta \in \T_\lambda}$ over $C$. The latter means that for any finite tuple $\bar{\eta} \in \T_\omega$ there is $\bar{\nu} \in \T_\lambda$ such that $\bar{\eta}$ and $\bar{\nu}$ have the same str-quantifier-free type and $b_{\bar{\eta}} \equiv_C a_{\bar{\nu}}$.
\end{enumerate}
\end{fact}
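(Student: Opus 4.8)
The plan is to obtain both parts from the tree modelling theorems of \cite{kamsma_positive_2023}, which are the positive, thick counterparts of the ones used in \cite[Section 5]{kaplan_kim-independence_2020}; the actual work of the proof is to see why that classical machinery transfers to our setting and to isolate where thickness is needed.

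For part (i), I would first view $\T_\alpha$ as a structure in the tree language $L_s$ of \cite{kaplan_kim-independence_2020} (with $\unlhd$, the meet function, the lexicographic order $<_\lex$ and the length-comparison predicates), and then invoke the \emph{modelling property} of this index structure: for any family $(a_\eta)_{\eta \in \T_\alpha}$ from the monster there is a family $(b_\eta)_{\eta \in \T_\alpha}$ that is s-indiscernible over $C$ and each of whose finite configurations is, over $C$, locally based on $(a_\eta)_{\eta \in \T_\alpha}$. This is a purely combinatorial statement about $\T_\alpha$, established as in \cite[Section 5]{kaplan_kim-independence_2020}: by the general principle that Ramsey index classes have the modelling property (Scow), it reduces to the class of finite $L_s$-trees being a Ramsey class, which is Milliken's tree theorem. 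Finally I would pass from the abstract modelling property to the positive version giving an honest s-indiscernible tree $\EM_s$-based on $(a_\eta)_{\eta \in \T_\alpha}$ over $C$; this last step is the only place thickness of $T$ is used, as it is precisely what makes ``being an s-indiscernible tree over $C$'' a type-definable condition — the tree analogue of the property of thickness recalled in \thref{basic-definitions}(ii) — so that the relevant space of generalised $\EM_s$-types is closed and the compactness argument in the monster goes through.

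For part (ii), the finer language $L_{str}$ does not give a Ramsey class of finite trees, so instead I would run the standard Erd\H{o}s--Rado collapsing of levels. Starting from $(a_\eta)_{\eta \in \T_\lambda}$, I would ascend through its $\lambda$ many levels, at each stage colouring the finite str-configurations spanned by the levels chosen so far by their positive type over $C$ and thinning so that each colour is homogeneous. The choice $\lambda = \beth_{(2^{|T| + |C| + \kappa})^+}$ is exactly what this extraction requires: the number of colours is bounded by the number of positive types over $C$ in $\kappa$ variables, which is at most $2^{|T| + |C| + \kappa}$, together with the finitely many str-quantifier-free types of a given finite configuration, and $\beth_{(2^{|T| + |C| + \kappa})^+}$ is the usual threshold for extracting an infinite, fully indiscernible object against that many colours. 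A concluding compactness step, again using thickness, produces the str-indiscernible tree $(b_\eta)_{\eta \in \T_\omega}$, and the homogeneity of the colours is exactly the asserted ``str-based on'' property: every finite $\bar\eta \in \T_\omega$ has some $\bar\nu \in \T_\lambda$ with the same str-quantifier-free type and $b_{\bar\eta} \equiv_C a_{\bar\nu}$.

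The step I expect to be the main obstacle — or at any rate the one needing genuine care — is checking that the classical combinatorics (the modelling-from-Ramsey principle, Milliken's tree theorem, Erd\H{o}s--Rado) goes through in positive thick logic without a hidden use of negation. The reason it does is that all of the combinatorics takes place in the genuinely first-order index structure $\T_\alpha$; the only model-theoretic ingredients are that configurations are coloured by their \emph{positive} type over $C$ and that the indiscernible tree is extracted via positive compactness, which is legitimate precisely because $T$ is thick. This is what is worked out in \cite{kamsma_positive_2023}, so in the write-up I expect this ``Fact'' to come down to a pointer there together with the remarks above.
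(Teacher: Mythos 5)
Your proposal is correct and takes essentially the same route as the paper: the paper's proof of this Fact is simply the citation you end up with, namely \cite[Theorem 4.6]{kamsma_positive_2023} for (i) (which is compactness applied to \cite[Proposition 5.8]{dobrowolski_kim-independence_2022}) and \cite[Theorem 4.8]{kamsma_positive_2023} for (ii), together with the observation that the latter, though stated for well-founded trees, has a proof that applies to the ill-founded trees $\T_\alpha$ used here. Your sketch of the underlying combinatorics (Milliken/modelling for s-indiscernibles, Erd\H{o}s--Rado level-collapsing with positive-type colourings for str-indiscernibles, thickness entering in the positive compactness/extraction step) is consistent with how those cited results are established.
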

\begin{proof}
Part (i) is \cite[Theorem 4.6]{kamsma_positive_2023}, which is essentially just compactness applied to \cite[Proposition 5.8]{dobrowolski_kim-independence_2022}. Part (ii) is \cite[Theorem 4.8]{kamsma_positive_2023}, which is technically stated for well-founded trees, but its proof applies to the ill-founded trees we are interested in here.
\end{proof}
\begin{lemma}
\thlabel{spread-out-preserved}
Let $T$ be a thick theory. Suppose that $(a_\eta)_{\eta \in \T_\alpha}$ is s-indiscernible and spread out over $M$ and that $(b_\eta)_{\eta \in \T_\omega}$ is str-based on $(a_\eta)_{\eta \in \T_\alpha}$ over $M$, then $(b_\eta)_{\eta \in \T_\omega}$ is spread out over $M$.
\end{lemma}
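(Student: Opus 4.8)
The plan is to verify, for each node $\eta\in\T_\omega$ with $\dom(\eta)=[\beta+1,\omega)$, that the sequence of cones $(b_{\unrhd\eta^\frown\langle i\rangle})_{i<\omega}$ is $M$-indiscernible and that $b_{\unrhd\eta^\frown\langle i\rangle}\ind^{iLs}_M(b_{\unrhd\eta^\frown\langle j\rangle})_{j<i}$ for every $i<\omega$; by \thref{indiscernible-invariant-independent-sequence-is-morley} this makes that sequence a Morley sequence in some global $M$-Ls-invariant type $q_\eta$, and then $q_\eta\supseteq\tp(b_{\unrhd\eta^\frown\langle 0\rangle}/M)$ holds automatically since $b_{\unrhd\eta^\frown\langle 0\rangle}\models q_\eta|_M$, which is exactly what \thref{spread-out-tree}(i) asks for. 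The indiscernibility of the cone sequence I expect to read off directly from the str-indiscernibility of $(b_\eta)_{\eta\in\T_\omega}$: for $i_0<\dots<i_n$ the finite subconfigurations of $b_{\unrhd\eta^\frown\langle i_0\rangle}\dots b_{\unrhd\eta^\frown\langle i_n\rangle}$ have a tree-language quantifier-free type that does not depend on the $i_l$, since two of the relevant nodes lying in distinct cones always meet at $\eta$ and their levels and lexicographic positions are unaffected.

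The real content is the independence statement. Fix $i$, write $c=b_{\unrhd\eta^\frown\langle i\rangle}$ and $d=(b_{\unrhd\eta^\frown\langle j\rangle})_{j<i}$, and fix a $(2^{|Md|+\lambda_T})^+$-saturated $N\supseteq Md$. By \thref{ls-invariance-type-definable} and \thref{extend-ls-invariant-type}, to obtain $c\ind^{iLs}_M d$ it suffices to show that $\tp(c/Md)\cup\Sigma(x)$ is consistent, where $\Sigma(x)$ is the partial type over $N$ from \thref{ls-invariance-type-definable} expressing $M$-Ls-invariance: a realisation $c^*$ then has $\tp(c^*/N)$ $M$-Ls-invariant, which extends to a global $M$-Ls-invariant type containing $\tp(c/Md)$. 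By compactness the task reduces to realising, for each finite piece, a formula $\phi(x,mb_{\bar\nu_0})\wedge\psi(x,\bar e)$, where $m$ is finite from $M$, $\bar\nu_0$ is a finite tuple of nodes each $\unrhd\eta^\frown\langle j\rangle$ for some $j<i$, $\bar e=(e_1,e_1',\dots,e_r,e_r')$ lies in $N$ with $\d_M(e_l,e_l')\le 1$ for all $l$, $\psi$ is implied by $\bigwedge_l\d_M(xe_l,xe_l')\le 2$, and only a finite subtuple $x'$ of $x$ occurs — corresponding to a finite tuple $\bar\nu_1$ of nodes each $\unrhd\eta^\frown\langle i\rangle$, so that $\phi(x',mb_{\bar\nu_0})\in\tp(b_{\bar\nu_1}/Mb_{\bar\nu_0})$.

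Here I would transport the problem to $(a_\eta)_{\eta\in\T_\alpha}$, solve it there using spread-outness, and carry the witness back by an automorphism. Using str-basedness (and, if needed, enlarging $\bar\nu_0\,{}^\frown\bar\nu_1$ by scaffold nodes and normalising via the s-indiscernibility of $(a_\eta)$, which should be checked against the definitions in \cite{kamsma_positive_2023}) one finds $\bar\nu_0'\,{}^\frown\bar\nu_1'$ in $\T_\alpha$ with $b_{\bar\nu_0}b_{\bar\nu_1}\equiv_M a_{\bar\nu_0'}a_{\bar\nu_1'}$ such that every node of $\bar\nu_1'$ is $\unrhd\mu^\frown\langle p\rangle$ for a single node $\mu$ with successor domain, while every node of $\bar\nu_0'$ is $\unrhd\mu^\frown\langle q\rangle$ for some $q$ in a finite $Q\subseteq\omega$ with $\max Q<p$ — this is where it is essential that the strong quantifier-free type retains $<_\lex$, so that the subcone of $\bar\nu_1'$ comes after those meeting $\bar\nu_0'$. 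Since $(a_\eta)_{\eta\in\T_\alpha}$ is spread out, $(a_{\unrhd\mu^\frown\langle l\rangle})_{l<\omega}$ is a Morley sequence in a global $M$-Ls-invariant type, so $a_{\unrhd\mu^\frown\langle p\rangle}\ind^{iLs}_M(a_{\unrhd\mu^\frown\langle l\rangle})_{l<p}$, and hence $a_{\unrhd\mu^\frown\langle p\rangle}\ind^{iLs}_M(a_{\unrhd\mu^\frown\langle q\rangle})_{q\in Q}$ as the latter tuple sits inside the former. Feeding a global $M$-Ls-invariant extension of $\tp(a_{\unrhd\mu^\frown\langle p\rangle}/M(a_{\unrhd\mu^\frown\langle q\rangle})_{q\in Q})$ into \thref{ls-invariance-type-definable} yields: for every finite $\bar f$ with $\d_M\le 1$ on its distinguished pairs, every $\phi_0(x',ma_{\bar\nu_0'})\in\tp(a_{\bar\nu_1'}/Ma_{\bar\nu_0'})$, and every $\psi_0$ implied by the matching $\d_M\le 2$ conditions, there is a realisation of $\phi_0(x',ma_{\bar\nu_0'})\wedge\psi_0(x',\bar f)$. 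Applying an automorphism fixing $M$ with $a_{\bar\nu_0'}a_{\bar\nu_1'}\mapsto b_{\bar\nu_0}b_{\bar\nu_1}$, and using that the admissible $\bar f$ form an $\Aut(\MM/M)$-invariant class, transports this scheme verbatim with $b_{\bar\nu_0},b_{\bar\nu_1}$ in place of $a_{\bar\nu_0'},a_{\bar\nu_1'}$; instantiating it at our $\bar e,\phi,\psi$ delivers the required realisation.

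The main obstacle I anticipate is precisely this bridge: ``spread out'' refers to global $M$-Ls-invariant types, an infinitary datum, whereas str-basedness only transports the quantifier-free-determined types of finite tuples over $M$, so one cannot literally pull back the invariant types witnessing spread-outness of $(a_\eta)$. The type-definability of Ls-invariance (\thref{ls-invariance-type-definable}), together with \thref{extend-ls-invariant-type}, is exactly what makes $\ind^{iLs}_M$ accessible to a compactness argument, and combining it with the automorphism that matches $b_{\bar\nu_0}b_{\bar\nu_1}$ with $a_{\bar\nu_0'}a_{\bar\nu_1'}$ is what lets a witness computed on the $a$-side be moved back. A secondary point needing care is the tree-combinatorial transfer — one must arrange that the ``fresh'' cone $\bar\nu_1'$ lands in a subcone of larger index than those meeting $\bar\nu_0'$, and at an immediate-successor level, since it is the Morley-sequence property of $(a_\eta)$ at such a node that supplies the needed instance of $\ind^{iLs}_M$.
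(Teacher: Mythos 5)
Your proposal is correct and follows essentially the same route as the paper's proof: reduce to showing the cone sequences are $M$-indiscernible with $b_{\unrhd\eta^\frown\langle i\rangle}\ind^{iLs}_M(b_{\unrhd\eta^\frown\langle j\rangle})_{j<i}$, apply \thref{indiscernible-invariant-independent-sequence-is-morley}, reduce the independence to finite subtuples via \thref{ls-invariance-type-definable}, and transport finite configurations to the $a$-tree by str-basedness, where spread-outness (plus preservation of $\wedge$, $\unlhd$ and $<_\lex$) supplies the needed $\ind^{iLs}_M$ before pulling back by an automorphism over $M$. Your explicit compactness unfolding with $\Sigma(x)$ and \thref{extend-ls-invariant-type} is just a spelled-out version of the paper's appeal to \thref{ls-invariance-type-definable}, so no substantive difference.
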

\begin{proof}
Let $\eta \in \T_\omega$, we have to show that $(b_{\unrhd \eta^\frown \langle i \rangle})_{i < \omega}$ is a Morley sequence in some global $M$-Ls-invariant type. We claim that $b_{\unrhd \eta^\frown \langle i \rangle} \ind^{iLs}_M (b_{\unrhd \eta^\frown \langle j \rangle})_{j < i}$ for all $i < \omega$. This is indeed enough, because $(b_{\unrhd \eta^\frown \langle i \rangle})_{i < \omega}$ is $M$-indiscernible by str-indiscernibility over $M$, and so the result follows by \thref{indiscernible-invariant-independent-sequence-is-morley}.

We prove the claim by showing that for all $i < \omega$ and all finite $b \subseteq b_{\unrhd \eta^\frown \langle i \rangle}$ and $b' \subseteq (b_{\unrhd \eta^\frown \langle j \rangle})_{j < i}$ we have $b \ind^{iLs}_M b'$, which is enough by \thref{ls-invariance-type-definable}. Let $\bar{\nu}_{i_1}, \ldots, \bar{\nu}_{i_n}$ be finite tuples in $\T_\omega$ such that $i_1 < \ldots < i_n < \omega$ and $\bigwedge \bar{\nu}_{i_k} \unrhd \eta^\frown \langle i_k \rangle$ for all $1 \leq k \leq n$. By str-basing there are $\gamma, \bar{\mu}_{i_1}, \ldots, \bar{\mu}_{i_n}$ in $\T_\alpha$ such that $\gamma \bar{\mu}_{i_1} \ldots \bar{\mu}_{i_n}$ has the same str-quantifier-free type as $\eta \bar{\nu}_{i_1} \ldots \bar{\nu}_{i_n}$ and $b_{\eta} b_{\bar{\nu}_{i_1}} \ldots b_{\bar{\nu}_{i_n}} \equiv_M a_\gamma a_{\bar{\mu}_{i_1}} \ldots a_{\bar{\mu}_{i_n}}$. We now have reduced the problem to showing that $a_{\bar{\mu}_{i_n}} \ind^{iLs}_M a_{\bar{\mu}_{i_1}} \ldots a_{\bar{\mu}_{i_{n-1}}}$. As $\gamma \lhd \bigwedge \bar{\mu}_{i_n}$, there must be some $m < \omega$ such that $\bigwedge \bar{\mu}_{i_n} \unrhd \gamma^\frown \langle m \rangle$. Furthermore, we have for every $1 \leq k < n$ that $\gamma \lhd \bigwedge \bar{\mu}_{i_k}$ and $\bigwedge \bar{\mu}_{i_k} <_\lex \bigwedge \bar{\mu}_{i_n}$, and so $\bigwedge \bar{\mu}_{i_k} \unrhd \gamma^\frown \langle j \rangle$ for some $j < m$. Because $(a_\eta)_{\eta \in \T_\alpha}$ is spread out over $M$ we have $a_{\unrhd \gamma^\frown \langle m \rangle} \ind^{iLs}_M (a_{\unrhd \gamma^\frown \langle j \rangle})_{j < m}$, and so $a_{\bar{\mu}_{i_n}} \ind^{iLs}_M a_{\bar{\mu}_{i_1}} \ldots a_{\bar{\mu}_{i_{n-1}}}$, as required.
\end{proof}
\begin{corollary}
\thlabel{base-morley-tree-on-s-indiscernible-tree}
Let $T$ be a thick theory, and let $C$ be some parameter set and $\kappa$ some cardinal. Set $\lambda = \beth_{\left( 2^{2^{\lambda_T + |C|} + \kappa} \right)^+}$. Given a tree $(a_\eta)_{\eta \in \T_\lambda}$ that is s-indiscernible and spread out over $C$, there is a Morley tree $(b_\eta)_{\eta \in \T_\omega}$ over $C$ that is str-Ls-based on $(a_\eta)_{\eta \in \T_\lambda}$ over $C$. The latter means that for any finite tuple $\bar{\eta} \in \T_\omega$ there is $\bar{\nu} \in \T_\lambda$ such that $\bar{\eta}$ and $\bar{\nu}$ have the same str-quantifier-free type and $b_{\bar{\eta}} \equivls_C a_{\bar{\nu}}$.
\end{corollary}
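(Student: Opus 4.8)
The plan is to pass to a small $\lambda_T$-saturated e.c.\ model $M \supseteq C$, run the tree modelling results over $M$ (where types determine Lascar strong types, by \thref{useful-facts}(ii)), and then transfer the result back down to $C$, exploiting that the given tree $(a_\eta)_{\eta \in \T_\lambda}$ is already s-indiscernible over $C$.

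First I would use \thref{useful-facts}(i) with parameter $\lambda_T + |C|$ to fix a $\lambda_T$-saturated e.c.\ model $M \supseteq C$ with $|M| \leq 2^{\lambda_T + |C|}$; since $|T| \leq \lambda_T \leq 2^{\lambda_T + |C|}$ the stated $\lambda$ satisfies $\lambda \geq \beth_{(2^{|T| + |M| + \kappa})^+}$, so \thref{tree-modelling}(ii) can be run over $M$ at this height. Applying \thref{tree-modelling}(i) over $M$ produces a tree $(a'_\eta)_{\eta \in \T_\lambda}$ that is s-indiscernible over $M$ and $\EM_s$-based on $(a_\eta)_{\eta \in \T_\lambda}$ over $M$. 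I would then record, for each finite $\bar\eta \in \T_\lambda$, the relation between $a'_{\bar\eta}$ and $a_{\bar\eta}$: pick $\bar\mu \in \T_\lambda$ with the same s-quantifier-free type as $\bar\eta$ and $a'_{\bar\eta} \equiv_M a_{\bar\mu}$ (by $\EM_s$-basing). On one hand, s-indiscernibility of $(a_\eta)$ over $C$ gives $a_{\bar\mu} \equiv_C a_{\bar\eta}$, so $a'_{\bar\eta} \equiv_C a_{\bar\eta}$. On the other hand, $\lambda_T$-saturation of $M$ gives $\d_M(a'_{\bar\eta}, a_{\bar\mu}) \leq 2$ by \thref{useful-facts}(ii), hence $a'_{\bar\eta} \equivls_C a_{\bar\mu}$; combining this with the auxiliary fact that in an s-indiscernible tree of large height any two finite subtuples with the same s-quantifier-free type have the same Lascar strong type over the base — the tree analogue of the standard fact for indiscernible sequences, proved by exhibiting an $\omega$-indexed family of parallel s-isomorphic copies forming an indiscernible sequence — we obtain $a_{\bar\mu} \equivls_C a_{\bar\eta}$ and hence $a'_{\bar\eta} \equivls_C a_{\bar\eta}$.

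Next I would apply \thref{tree-modelling}(ii) over $M$ to $(a'_\eta)_{\eta \in \T_\lambda}$, obtaining a tree $(b_\eta)_{\eta \in \T_\omega}$ that is str-indiscernible over $M$ and str-based on $(a'_\eta)$ over $M$: for every finite $\bar\eta \in \T_\omega$ there is $\bar\nu \in \T_\lambda$ with the same str-quantifier-free type and $b_{\bar\eta} \equiv_M a'_{\bar\nu}$. Since $a'_{\bar\nu} \equiv_C a_{\bar\nu}$, the tree $(b_\eta)$ is str-based on $(a_\eta)_{\eta \in \T_\lambda}$ over $C$; as $(a_\eta)$ is s-indiscernible and spread out over $C$, \thref{spread-out-preserved} (taken with $C$ in place of $M$) shows that $(b_\eta)$ is spread out over $C$, and being str-indiscernible over $C$ as well it is a Morley tree over $C$. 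For the str-Ls-basing, given finite $\bar\eta \in \T_\omega$ and the corresponding $\bar\nu \in \T_\lambda$, \thref{useful-facts}(ii) gives $\d_M(b_{\bar\eta}, a'_{\bar\nu}) \leq 2$, so $b_{\bar\eta} \equivls_C a'_{\bar\nu} \equivls_C a_{\bar\nu}$, with $\bar\nu$ of the same str-quantifier-free type as $\bar\eta$; this is exactly what is required.

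The step I expect to be the main obstacle is the Lascar-strong-type bookkeeping in the second paragraph. \thref{tree-modelling}(i) preserves only s-quantifier-free types, and only types rather than Lascar strong types, so arriving at $\equivls_C$ (as the conclusion demands) rather than merely $\equiv_C$ forces one both to use the $\lambda_T$-saturation of $M$ and to establish the homogeneity fact for s-indiscernible trees; the remaining steps are routine once one keeps careful track of whether each statement is relative to $C$ or to $M$.
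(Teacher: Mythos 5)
Your argument is correct, and its skeleton is the paper's: fix a $\lambda_T$-saturated $M \supseteq C$ with $|M| \leq 2^{\lambda_T + |C|}$ via \thref{useful-facts}(i), run str-modelling over $M$, deduce that the resulting tree is spread out over $C$ from \thref{spread-out-preserved} via str-basing over $C$, and upgrade $\equiv_M$ to $\equivls_C$ via \thref{useful-facts}(ii). The one genuine difference is your intermediate step: the paper applies \thref{tree-modelling}(ii) over $M$ directly to $(a_\eta)_{\eta \in \T_\lambda}$, whereas you first re-extract a tree $(a'_\eta)_{\eta \in \T_\lambda}$ that is s-indiscernible over $M$ using \thref{tree-modelling}(i), so that the s-indiscernibility hypothesis of \thref{tree-modelling}(ii) is verified literally over the base $M$ you model over (the given tree is only assumed s-indiscernible over $C$); the price is the extra Lascar bookkeeping $a'_{\bar{\nu}} \equivls_C a_{\bar{\nu}}$, needed because $\EM_s$-basing only matches s-quantifier-free types while the conclusion must keep the str-quantifier-free type of $\bar{\nu}$. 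Your ``auxiliary fact'' there (in a tree s-indiscernible over $C$, finite tuples with the same s-quantifier-free type have the same Lascar strong type over $C$) is exactly \cite[Proposition 4.5]{kamsma_positive_2023}, which this paper already invokes in the proof of \thref{ramsey-fix}, so you can simply cite it instead of sketching a proof. What your route buys is this extra hypothesis-checking; what the paper's route buys is brevity (one modelling step, no transfer through an intermediate tree). The cardinal arithmetic works out the same way in both: $|T| + |M| + \kappa \leq 2^{\lambda_T + |C|} + \kappa$, so the stated $\lambda$ dominates the threshold $\beth_{(2^{|T| + |M| + \kappa})^+}$ required by \thref{tree-modelling}(ii) over $M$ (restricting the tree to that height if necessary).
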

\begin{proof}
By \thref{useful-facts}(i) there is $\lambda_T$-saturated $M \supseteq C$ with $|M| \leq 2^{\lambda_T + |C|}$. Using \thref{tree-modelling}(ii) we find a tree $(b_\eta)_{\eta \in \T_\omega}$ that is str-indiscernible over $M$ and str-based on $(a_\eta)_{\eta \in \T_\lambda}$ over $M$. In particular $(b_\eta)_{\eta \in \T_\omega}$ is str-based on $(a_\eta)_{\eta \in \T_\lambda}$ over $C$, so it is spread out over $C$ by \thref{spread-out-preserved} and hence it is a Morley tree over $C$. Finally, by str-basing, we have that for any finite tuple $\bar{\eta} \in \T_\omega$ there is $\bar{\nu} \in \T_\lambda$ such that $\bar{\eta}$ and $\bar{\nu}$ have the same str-quantifier-free type and $b_{\bar{\eta}} \equiv_M a_{\bar{\nu}}$. By our choice of $M$ this implies $b_{\bar{\eta}} \equivls_C a_{\bar{\nu}}$, as required.
\end{proof}
The following key lemma in constructing spread out trees is due to N.\ Ramsey, for which we take terminology from \cite[Definition 1.14]{chernikov_transitivity_2023}.
\begin{definition}
\thlabel{mutually-s-indiscernible}
We call a sequence of trees $((a^i_\eta)_{\eta \in \T_\alpha})_{i < \omega}$ \emph{mutually s-indiscernible over $C$} if $(a^i_\eta)_{\eta \in \T_\alpha}$ is s-indiscernible over $C((a^j_\eta)_{\eta \in \T_\alpha})_{j \neq i, j < \omega}$ for all $i < \omega$.
\end{definition}
\begin{lemma}
\thlabel{ramsey-fix}
Let $T$ be a thick theory and let $(a_\eta)_{\eta \in \T_\alpha}$ be a tree that is s-indiscernible over $M$. Then there is a Morley sequence $((a^i_\eta)_{\eta \in \T_\alpha})_{i < \omega}$ in some global $M$-Ls-invariant type with $(a^0_\eta)_{\eta \in \T_\alpha} = (a_\eta)_{\eta \in \T_\alpha}$ that is mutually s-indiscernible over $M$.
\end{lemma}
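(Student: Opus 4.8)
The plan is to build the $\omega$-indexed family of trees $(a^i_\eta)_{\eta \in \T_\alpha}$ one at a time via a compactness/extraction argument, and then upgrade the resulting sequence to a Morley sequence using \thref{indiscernible-invariant-independent-sequence-is-morley}. First I would note that the whole tree $(a_\eta)_{\eta \in \T_\alpha}$ can be coded as a single (large) tuple, say $a^0$, which by hypothesis has the property that it is s-indiscernible over $M$. Working in a monster model that is sufficiently saturated, I would construct a long sequence $(a^i)_{i < \lambda}$, where each $a^i$ is a tree-coded tuple with $a^0$ the given tree, such that for every $i < \lambda$ we have $a^i \ind^{iLs}_M a^{<i}$ and $a^i \equivls_M a^0$, and moreover each $a^i$ (viewed as a tree) is s-indiscernible over $M a^{<i}$. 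This is done by recursion: having built $(a^j)_{j < i}$, use \thref{useful-facts}(iv) to extend $\Lstp(a^0/M)$ to a global $M$-Ls-invariant type $q$, realise $q|_{M a^{<i}}$ by some $a'$, and then — crucially — apply the tree modelling theorem \thref{tree-modelling}(i) over the base $M a^{<i}$ to extract from the tree $a'$ an s-indiscernible-over-$M a^{<i}$ tree $a^i$ that is $\EM_s$-based on $a'$ over $M a^{<i}$; since $M a^{<i} \supseteq M$, the tree $a^i$ is still s-indiscernible over $M$, and $\EM_s$-basing preserves the Lascar strong type over $M a^{<i}$ so $a^i \equivls_{M a^{<i}} a'$, whence $a^i \ind^{iLs}_M a^{<i}$ still holds (the Ls-invariant extension of $\tp(a^i/M a^{<i})$ is just $q$ restricted appropriately, using \thref{extend-ls-invariant-type} in a suitably saturated model, or more simply since $a^i \equivls_M a^0$ and the relevant independence only depends on the Lascar strong type over $M a^{<i}$).

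Next, having produced $(a^i)_{i < \lambda}$ with $\lambda$ large, I would extract from it an $M$-indiscernible sequence $(\tilde a^i)_{i < \omega}$ of tree-coded tuples using the standard Ramsey/Erdős–Rado argument available in thick theories (as in \cite[Lemma 2.17]{dobrowolski_kim-independence_2022}), with the additional bookkeeping that each $\tilde a^i$ remains an s-indiscernible-over-$M$ tree (this is a finitary, $M$-local condition on the tuple, hence preserved under extraction) and that the independence $\tilde a^i \ind^{iLs}_M \tilde a^{<i}$ is preserved — the latter follows because by \thref{ls-invariance-type-definable} the property ``$\tilde a^i \ind^{iLs}_M \tilde a^{<i}$'' is witnessed by the partial type $\Sigma$ over $M$ together with the realised type of $\tilde a^i$ over $M \tilde a^{<i}$, and $\EM$-basing an $M$-indiscernible sequence on $(a^i)_{i < \lambda}$ over $M$ transfers exactly such finitary conditions. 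Then $(\tilde a^i)_{i < \omega}$ is $M$-indiscernible with $\tilde a^i \ind^{iLs}_M \tilde a^{<i}$ for all $i$, so by \thref{indiscernible-invariant-independent-sequence-is-morley} it is a Morley sequence in some global $M$-Ls-invariant type. Finally, mutual s-indiscernibility over $M$: for a fixed $i$, the tree $\tilde a^i$ is s-indiscernible over $M \tilde a^{<i}$ by construction, and by $M$-indiscernibility of the whole sequence it is also s-indiscernible over $M \tilde a^{>i}$ in the appropriate combined sense; one assembles from $M$-indiscernibility of $(\tilde a^i)_i$ and s-indiscernibility of $\tilde a^i$ over $M \tilde a^{\neq i}$ the statement that $\tilde a^i$ is s-indiscernible over $M (\tilde a^j)_{j \neq i}$, which is exactly \thref{mutually-s-indiscernible}. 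Replacing the given tree by $\tilde a^0$ via an automorphism (legitimate since $\tilde a^0 \equivls_M a^0 \equiv_M a_\eta$-tree, and one can re-extract so that $\tilde a^0$ is literally the given tree, or simply apply an automorphism fixing $M$) gives the statement with $(a^0_\eta)_{\eta \in \T_\alpha} = (a_\eta)_{\eta \in \T_\alpha}$.

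The main obstacle I anticipate is the careful interleaving of the two "shapes" of indiscernibility — s-indiscernibility of each individual tree over $M$ (and over the other trees) versus $M$-indiscernibility of the sequence of trees — and in particular making sure that the extraction step which produces the $M$-indiscernible sequence does not destroy either the per-tree s-indiscernibility or the $\ind^{iLs}$ independence. Both of these are $M$-local, finitary conditions (the first by definition of s-indiscernibility, the second via the type-definability of Ls-invariance in \thref{ls-invariance-type-definable}), so in principle they survive any Ramsey-style extraction that only adds $M$ to the base; but spelling this out cleanly, and in particular checking that ``s-indiscernible over $M \tilde a^{<i}$'' combined with ``sequence $M$-indiscernible'' yields ``s-indiscernible over $M \tilde a^{\neq i}$'', will require some care with the combinatorics of the index tree $\T_\alpha$. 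A secondary technical point is justifying at each recursion step that the $\EM_s$-based tree $a^i$ still satisfies $a^i \ind^{iLs}_M a^{<i}$; the clean way is to observe that $\EM_s$-basing over $M a^{<i}$ preserves $\Lstp(-/M a^{<i})$ up to the bounded equivalence relation, hence preserves membership in the realisation set of $q|_{M a^{<i}}$ for the global $M$-Ls-invariant $q$ fixed at that stage, after a harmless automorphism over $M a^{<i}$.
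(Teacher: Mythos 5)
There are two genuine gaps in your proposal, and they sit exactly at the points where the lemma earns its keep.

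First, the recursion step. You realise $a' \models q|_{Ma^{<i}}$ and then $\EM_s$-base over $Ma^{<i}$ to get an s-indiscernible tree $a^i$, claiming that ``$\EM_s$-basing preserves the Lascar strong type over $Ma^{<i}$, so $a^i \equivls_{Ma^{<i}} a'$''. This is false in general: $\EM_s$-basing only guarantees that for each finite $\bar{\eta}$ there is \emph{some} $\bar{\nu}$ of the same s-quantifier-free type with $a^i_{\bar{\eta}} \equiv_{Ma^{<i}} a'_{\bar{\nu}}$; since $a'$ need not be s-indiscernible over $Ma^{<i}$ (that is precisely why you base), the tree $a^i$ need not even have the same type over $Ma^{<i}$ as $a'$, so neither $a^i \models q|_{Ma^{<i}}$ nor $a^i \ind^{iLs}_M a^{<i}$ is justified. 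This is exactly the pitfall described in \thref{rem:missing-ingredient-kaplan-ramsey}: replacing a tree by an s-indiscernible tree $\EM_s$-based on it can destroy the invariance/spread-out data. Your fallback justifications do not rescue it: \thref{ls-invariance-type-definable} and \thref{extend-ls-invariant-type} characterise and extend Ls-invariance only for types over a $(2^{|C|+\lambda_T})^+$-saturated base, and $Ma^{<i}$ is not such a base; and ``the independence only depends on the Lascar strong type over $Ma^{<i}$'' begs the question, since that Lascar strong type is what is not preserved. The paper's proof avoids this by working over one large saturated $N \supseteq M$ from the start: it shows (Claim in the proof) that the $M$-Ls-invariance conditions $\d_M(x_{\bar{\eta}}b, x_{\bar{\eta}}b') \leq 2$ for $b,b' \in N$ \emph{are} part of the $\EM_s$-type over $N$, so s-indiscernibilising over $N$ keeps the type $M$-Ls-invariant, and then uses the unique global extension over $N$ to take a single Morley sequence of the whole tree.

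Second, the mutual s-indiscernibility at the end. Your construction yields (at best) that each $\tilde{a}^i$ is s-indiscernible over $M\tilde{a}^{<i}$, i.e.\ over the \emph{earlier} trees, and you then assert that ``by $M$-indiscernibility of the whole sequence it is also s-indiscernible over $M\tilde{a}^{>i}$''. That does not follow: $M$-indiscernibility of the sequence of (tree-coded) tuples compares different indices $i$, but says nothing about comparing two finite tuples $\bar{\eta}, \bar{\nu}$ \emph{inside} $\tilde{a}^i$ over the parameters $\tilde{a}^j$, $j > i$; your sentence is in fact circular, as it assumes s-indiscernibility over $M\tilde{a}^{\neq i}$ to conclude it. Getting s-indiscernibility over the later trees is the crux of the lemma, and in the paper it is done by induction on $k$: from s-indiscernibility one upgrades $\tilde{a}^i_{\bar{\eta}} \equiv \tilde{a}^i_{\bar{\nu}}$ to Lascar-strong-type equality over the base (via \cite[Proposition 4.5]{kamsma_positive_2023}), and then uses that the later tree realises the restriction of a global $M$-Ls-invariant type over everything before it, so adding it to the base preserves that equality. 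Your setup does contain the raw ingredient for such an argument (namely $\tilde{a}^k \ind^{iLs}_M \tilde{a}^{<k}$, if it survived the first gap), but the argument itself is missing, and without it the proposal does not establish the statement.
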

\begin{proof}
Let $q((x_\eta)_{\eta \in \T_\alpha}) \supseteq \tp((a_\eta)_{\eta \in \T_\alpha}/M)$ be a global $M$-Ls-invariant type. Let $N \supseteq M$ be $(2^{|M| + \lambda_T})^+$-saturated, and let $(a'_\eta)_{\eta \in \T_\alpha} \models q|_N$. Apply the s-modelling theorem (\thref{tree-modelling}(i)) to find a tree $(a''_\eta)_{\eta \in \T_\alpha}$ that is s-indiscernible over $N$ and $\EM_s$-based on $(a'_\eta)_{\eta \in \T_\alpha}$ over $N$.
\begin{claim}
\thlabel{ramsey-fix:m-ls-invariance}
The type $\tp((a''_\eta)_{\eta \in \T_\alpha}/N)$ is $M$-Ls-invariant.
\end{claim}
\begin{proof}[Proof of claim]
By \thref{ls-invariance-type-definable} it is enough to show that for any finite $b,b' \in N$ with $\d_M(b, b') \leq 1$ we have $\d_M((x_\eta)_{\eta \in \T_\alpha} b, (x_\eta)_{\eta \in \T_\alpha} b') \leq 2 \subseteq \tp((a''_\eta)_{\eta \in \T_\alpha}/N)$. By thickness we have that $\d_M((x_\eta)_{\eta \in \T_\alpha} b, (x_\eta)_{\eta \in \T_\alpha} b') \leq 2$ is given by
\[
\bigcup \{ \d_M(x_{\bar{\eta}} b, x_{\bar{\eta}} b') \leq 2 : \bar{\eta} \text{ is a finite tuple in } \T_\alpha \}.
\]
Let $\bar{\eta}$ be any finite tuple in $\T_\alpha$. For any $\bar{\nu}$ that has the same s-quantifier-free type as $\bar{\eta}$ we have that $\d_M(x_{\bar{\nu}} b, x_{\bar{\nu}} b') \leq 2 \subseteq \tp((a'_\eta)_{\eta \in \T_\alpha}/N)$ by \thref{ls-invariance-type-definable}, because $\tp((a'_\eta)_{\eta \in \T_\alpha}/N) = q|_N$ is $M$-Ls-invariant. We thus see that $\d_M(x_{\bar{\eta}} b, x_{\bar{\eta}} b') \leq 2 \subseteq \EM_s((a'_\eta)_{\eta \in \T_\alpha}/N) \subseteq \tp((a''_\eta)_{\eta \in \T_\alpha}/N)$, which concludes the proof of the claim.
\end{proof}
By \thref{ramsey-fix:m-ls-invariance}, \thref{extend-ls-invariant-type} and our choice of $N$ there is a unique global $M$-Ls-invariant type $q''((x_\eta)_{\eta \in \T_\alpha}) \supseteq \tp((a''_\eta)_{\eta \in \T_\alpha}/N)$. Let $((b^i_\eta)_{\eta \in \T_\alpha})_{i < \omega}$ be a Morley sequence in $q''$ over $N$.
\begin{claim}
\thlabel{ramsey-fix:s-indiscernibility}
The sequence $(b^i_\eta)_{\eta \in \T_\alpha}$ is mutually s-indiscernible over $N$.
\end{claim}
\begin{proof}[Proof of claim]
Fix $i < \omega$. We prove by induction on $k \geq i$ that $(b^i_\eta)_{\eta \in \T_\alpha}$ is s-indiscernible over $N((b^j_\eta)_{\eta \in \T_\alpha})_{j \neq i, j < k}$.

For the base case $k = i$ we need to prove that  $(b^i_\eta)_{\eta \in \T_\alpha}$ is s-indiscernible over $N((b^j_\eta)_{\eta \in \T_\alpha})_{j < i}$. Let $\bar{\eta}, \bar{\nu} \in \T_\alpha$ be finite tuples with the same s-quantifier-free type. As $(b^i_\eta)_{\eta \in \T_\alpha} \equiv_N (a''_\eta)_{\eta \in \T_\alpha}$, we have that it is s-indiscernible over $N$. So there is a single type (after renaming variables) $p(y) = \tp(b^i_{\bar{\eta}}/N) = \tp(b^i_{\bar{\nu}}/N)$, which is $M$-Ls-invariant by \thref{ramsey-fix:m-ls-invariance}. Since $q''(x_{\bar{\eta}})$ and $q''(x_{\bar{\nu}})$ are both global $M$-Ls-invariant extensions of $p(y)$ we have that $q''(x_{\bar{\eta}}) = q''(x_{\bar{\nu}})$, after renaming variables. By construction $b^i_{\bar{\eta}} \models q''(x_{\bar{\eta}})|_{N ((b^j_\eta)_{\eta \in \T_\alpha})_{j < i}}$ and $b^i_{\bar{\nu}} \models q''(x_{\bar{\nu}})|_{N ((b^j_\eta)_{\eta \in \T_\alpha})_{j < i}}$, so $b^i_{\bar{\eta}} \equiv_{N ((b^j_\eta)_{\eta \in \T_\alpha})_{j < i}} b^i_{\bar{\nu}}$ follows, as required.

For the successor step we have $k > i$, and we assume that $(b^i_\eta)_{\eta \in \T_\alpha}$ is s-indiscernible over $N((b^j_\eta)_{\eta \in \T_\alpha})_{j \neq i, j < k}$. Let $\bar{\eta}, \bar{\nu} \in \T_\alpha$ be finite tuples with the same s-quantifier-free type. By the induction hypothesis we have
\[
b^i_{\bar{\eta}} \equivls_{N ((b^j_\eta)_{\eta \in \T_\alpha})_{j \neq i, j < k}} b^i_{\bar{\nu}},
\]
where we get equivalence of Lascar-strong types instead of just normal types from s-indiscernibility (see e.g.\ \cite[Proposition 4.5]{kamsma_positive_2023}). As $(b^k_\eta)_{\eta \in \T_\alpha}$ realises an $M$-Ls-invariant type over $N ((b^j_\eta)_{\eta \in \T_\alpha})_{j < k}$ and $N \supseteq M$ we get
\[
(b^k_\eta)_{\eta \in \T_\alpha} b^i_{\bar{\eta}} \equivls_{N ((b^j_\eta)_{\eta \in \T_\alpha})_{j \neq i, j < k}} (b^k_\eta)_{\eta \in \T_\alpha} b^i_{\bar{\nu}},
\]
which completes the induction step and thus the proof of the claim.
\end{proof}
We have $(b^0_\eta)_{\eta \in \T_\alpha} \equiv_M (a''_\eta)_{\eta \in \T_\alpha} \equiv_M (a'_\eta)_{\eta \in \T_\alpha} \equiv_M (a_\eta)_{\eta \in \T_\alpha}$, where the middle equality of types follows because $(a'_\eta)_{\eta \in \T_\alpha}$ is s-indiscernible over $M$ and so its $\EM_s$-type over $M$ is maximal (i.e.\ is the same as its type over $M$) and $(a''_\eta)_{\eta \in \T_\alpha}$ is in particular $\EM_s$-based on $(a'_\eta)_{\eta \in \T_\alpha}$ over $M$. So by an automorphism we find $((a^i_\eta)_{\eta \in \T_\alpha})_{i < \omega} \equiv_M ((b^i_\eta)_{\eta \in \T_\alpha})_{i < \omega}$, with $(a^0_\eta)_{\eta \in \T_\alpha} = (a_\eta)_{\eta \in \T_\alpha}$, which is then as required by construction of $((b^i_\eta)_{\eta \in \T_\alpha})_{i < \omega}$ and \thref{ramsey-fix:s-indiscernibility}.
\end{proof}
\begin{remark}
\thlabel{rem:missing-ingredient-kaplan-ramsey}
\thref{ramsey-fix} is in fact a missing ingredient in \cite{kaplan_kim-independence_2020}, in particular in the inductive steps in their Lemmas 5.11 and 6.4. There they replace some spread out tree $A$ by an s-indiscernible tree $B$ locally based on $A$ (in our terminology: $\EM_s$-based). However, this process might not preserve the property of being spread out. By replacing the inductive step by \thref{ramsey-fix}, the argument can be fixed.

In existing work on Kim-independence over arbitrary sets there is the same issue, as discussed in \cite[Page 7]{chernikov_transitivity_2023}. This can be fixed in a similar manner: \cite[Lemma 1.15]{chernikov_transitivity_2023} is a variant of \thref{ramsey-fix} over arbitrary sets (in full first-order logic), and can then be used in the inductive steps in the same way.

We also remark that this is not an issue in \cite{dobrowolski_kim-independence_2022}, because the proofs there make use of a different notion called ``$q$-spread-out''. The point of this notion is that it is type-definable, so it can be captured by the $\EM_s$-type. The gap in the proof of the Independence Theorem that this corrigendum addresses is of a different nature.
\end{remark}
The following lemma illustrates the use of \thref{ramsey-fix} and completes the proof of \thref{kims-lemma-tree-morley-sequences}.
\begin{lemma}
\thlabel{tree-morley-sequences-exist}
Let $T$ be a thick theory. For any $a$ and $M$ there is a tree Morley sequence $(a_i)_{i < \omega}$ over $M$ with $a_0 = a$.
\end{lemma}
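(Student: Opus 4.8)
The plan is to build, by transfinite recursion on ordinals $\alpha$, a coherent system of trees $(a^\alpha_\eta)_{\eta \in \T_\alpha}$ that are simultaneously s-indiscernible and spread out over $M$, arranged so that the all-zeros leaf (the node with domain $[0,\alpha)$ and constant value $0$) always carries the value $a$; then to straighten the big tree into a Morley tree via \thref{base-morley-tree-on-s-indiscernible-tree} and extract its all-zeros branch. For $\alpha = 0$ the tree $\T_0$ consists of a single node, to which we assign $a$. For $\alpha$ a limit ordinal we take $(a^\alpha_\eta)_{\eta \in \T_\alpha}$ to be the union of the trees already built, along the canonical coherent embeddings $\T_\beta \hookrightarrow \T_\alpha$ for $\beta < \alpha$, assigning the root — the unique node not in the image of any such embedding — the empty tuple; both s-indiscernibility and spread-outness over $M$ pass to such a union (for the latter, each instance concerns nodes of bounded support, hence lies inside some $\T_\beta$), and the all-zeros leaf lies in the image of every $\T_\beta \hookrightarrow \T_\alpha$, so it keeps the value $a$.

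The heart of the construction is the successor step $\alpha = \beta + 1$, where $\T_{\beta+1}$ consists of $\omega$ copies of $\T_\beta$ glued below a common new root. Since $(a^\beta_\eta)_{\eta \in \T_\beta}$ is s-indiscernible over $M$ by the induction hypothesis, \thref{ramsey-fix} provides a sequence of trees $((c^i_\eta)_{\eta \in \T_\beta})_{i < \omega}$ that is a Morley sequence in a global $M$-Ls-invariant type, is mutually s-indiscernible over $M$, and has $(c^0_\eta)_{\eta \in \T_\beta} = (a^\beta_\eta)_{\eta \in \T_\beta}$. We let the $i$-th copy of $\T_\beta$ inside $\T_{\beta+1}$ be $(c^i_\eta)_{\eta \in \T_\beta}$ and give the new root the empty tuple. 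As the $0$-th copy is literally $(a^\beta_\eta)_{\eta \in \T_\beta}$, coherence — and the value $a$ at the all-zeros leaf — is preserved. Spread-outness over $M$ at the new top level is exactly the conclusion of \thref{ramsey-fix} (the subtrees below the root are the copies, they form a Morley sequence in a global $M$-Ls-invariant type, and such a type contains the type over $M$ of the $0$-th copy); at lower levels it is inherited copy by copy, using that being spread out over $M$ is preserved by $\Aut(\MM/M)$. Finally, $(a^{\beta+1}_\eta)_{\eta \in \T_{\beta+1}}$ is s-indiscernible over $M$ by a now-standard argument: a finite tuple in $\T_{\beta+1}$ distributes among the copies according to its s-quantifier-free type, so two tuples of the same s-quantifier-free type are matched by first aligning the copies via the $M$-indiscernibility of the Morley sequence $((c^i_\eta)_{\eta})_{i < \omega}$ and then aligning the tuples within each copy via mutual s-indiscernibility over $M$, chaining the steps through Lascar-strong types.

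Running the recursion up to a sufficiently large $\lambda$ (the one demanded by \thref{base-morley-tree-on-s-indiscernible-tree} with base $M$) and applying that corollary to $(a^\lambda_\eta)_{\eta \in \T_\lambda}$ yields a Morley tree $(b_\eta)_{\eta \in \T_\omega}$ over $M$ that is str-Ls-based on it over $M$. Applying str-Ls-basing to a finite tuple that includes the all-zeros leaf $\zeta_0 \in \T_\omega$ and identifies it as a non-root node, one obtains a non-root $\nu \in \T_\lambda$ with $b_{\zeta_0} \equivls_M a_\nu$; since all single non-root nodes have the same s-quantifier-free type, s-indiscernibility of $(a^\lambda_\eta)_{\eta \in \T_\lambda}$ gives $a_\nu \equivls_M a$, and hence $b_{\zeta_0} \equivls_M a$. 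Picking $f \in \Aut(\MM/M)$ with $f(b_{\zeta_0}) = a$ and applying $f$ to $(b_\eta)_{\eta \in \T_\omega}$ — which preserves both str-indiscernibility over $M$ and spread-outness over $M$ — produces a Morley tree over $M$ of infinite height whose all-zeros branch is the desired tree Morley sequence over $M$ with first term $a$.

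The main obstacle is precisely the successor step: one cannot first construct a spread-out tree and then make it s-indiscernible, since that second operation can destroy spread-outness (exactly the phenomenon noted in \thref{rem:missing-ingredient-kaplan-ramsey}). The role of \thref{ramsey-fix} is to perform the s-indiscernibilization while simultaneously producing the spread-out structure one level higher, and the care lies in checking that coherence, the prescribed value at the all-zeros leaf, spread-outness, and s-indiscernibility of the glued tree all survive this step at once.
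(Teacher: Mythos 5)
Your overall architecture is the paper's: transfinite recursion up to the $\lambda$ of \thref{base-morley-tree-on-s-indiscernible-tree}, with \thref{ramsey-fix} doing the work at successor steps, followed by str-Ls-basing and an automorphism over $M$ at the end. But there is a genuine gap, and it comes from your decision to assign the empty tuple to the roots. Since at every successor step the new root is empty and each copy carries along the (empty) root of the previous stage, an easy induction shows that in your final tree $(a^\lambda_\eta)_{\eta \in \T_\lambda}$ only the $\unlhd$-maximal nodes (those with domain $[0,\lambda)$) carry realisations of $\tp(a/M)$; every node admitting a proper $\unlhd$-successor carries the empty tuple. This kills the extraction step. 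Str-quantifier-free types do not record absolute levels (this is exactly what lets \thref{tree-modelling}(ii) pass from $\T_\lambda$ to $\T_\omega$, and what makes the sequences $(c_{\eta_i})$ in \thref{kims-lemma-tree-morley-sequences} indiscernible), so basing the pair $(\zeta_1,\zeta_0)$ matches $\zeta_1$ with \emph{some} strict $\unlhd$-ancestor in $\T_\lambda$ --- which in your tree is an empty tuple. By str-indiscernibility all single nodes of the resulting tree then have that degenerate type over $M$, so $b_{\zeta_0} \equivls_M a$ fails and the branch $(b_{\zeta_i})_{i<\omega}$ does not consist of realisations of $\tp(a/M)$; you cannot arrange $a_0 = a$. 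Your attempted justification, ``all single non-root nodes have the same s-quantifier-free type'', does not save this: if the s-language records levels (as in the definitions this paper imports), the claim is simply false; if it did not, then already your successor step would fail, since the empty root would be s-equivalent to a nonempty node and the glued tree could not be s-indiscernible (nor would it be a tree of $\kappa$-tuples as \thref{tree-modelling} and \thref{base-morley-tree-on-s-indiscernible-tree} require).

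The paper avoids this by maintaining the invariant that \emph{every} node at every stage realises $\tp(a/M)$: at the successor step the new root is assigned $a$ itself. The price is that the glued tree is then no longer s-indiscernible --- the root lies inside copy $0$, so a tuple through the root relates to copy $0$ differently than to the other copies, and your alignment argument (mutual s-indiscernibility plus $M$-indiscernibility of the sequence of copies), which is correct for a neutral root, breaks down there. Hence the paper applies \thref{tree-modelling}(i) once more and uses the fact that the $\EM_s$-type contains the full type of the non-root part to restore, by an automorphism, the non-root part and with it coherence and spread-outness, while the root keeps type $\tp(a/M)$. With that invariant, str-Ls-basing immediately yields $a_{\zeta_0} \equiv_M a$ and the final automorphism over $M$ finishes the proof. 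So the missing idea is precisely this invariant together with the extra modelling step it forces; the remaining ingredients of your argument (the use of \thref{ramsey-fix}, the limit stages, and the observation that spread-outness and str-indiscernibility over $M$ are preserved by $\Aut(\MM/M)$) are in line with the paper.
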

\begin{proof}
Let $\lambda$ be the cardinal from \thref{base-morley-tree-on-s-indiscernible-tree}, where $M$ and $|a|$ take the respective roles of $C$ and $\kappa$ there. By induction on $\alpha \leq \lambda$ we will construct trees $(a^\alpha_\eta)_{\eta \in \T_\alpha}$, such that:
\begin{enumerate}
\item for all $\eta \in \T_\alpha$ we have $a^\alpha_\eta \equiv_M a$,
\item the tree $(a^\alpha_\eta)_{\eta \in \T_\alpha}$ is spread out and s-indiscernible over $M$,
\item for all $\beta < \alpha$ we have $a^\alpha_{\iota_{\beta \alpha}(\eta)} = a^\beta_\eta$ for all $\eta \in \T_\beta$.
\end{enumerate}
We start by setting $a^0_\emptyset = a$. For a limit stage $\ell$, we set $a^\ell_{\iota_{\beta \ell}(\eta)} = a^\beta_\eta$, where $\beta$ ranges over all ordinals $< \ell$ and $\eta$ ranges over all elements in $\T_\beta$. This is well-defined by property (3), and properties (1) and (2) follow immediately from the induction hypothesis.

For the successor step we suppose $(a^\alpha_\eta)_{\eta \in \T_\alpha}$ has been constructed. By \thref{ramsey-fix} we find a Morley sequence $((a^\alpha_{\eta,i})_{\eta \in \T_\alpha})_{i < \omega}$ in some global $M$-Ls-invariant type with $(a^\alpha_{\eta,0})_{\eta \in \T_\alpha} = (a^\alpha_\eta)_{\eta \in \T_\alpha}$ that is mutually s-indiscernible over $M$. Define a tree $(b_\eta)_{\eta \in \T_{\alpha+1}}$ by setting $b_\emptyset = a$ and $b_{\langle i \rangle^\frown \eta} = a^\alpha_{\eta,i}$ for all $\eta \in \T_\alpha$ and $i < \omega$. The $\EM_s$-type of $(b_\eta)_{\eta \in \T_{\alpha+1}}$ over $M$ satisfies the following properties.
\begin{enumerate}[label=(\roman*)]
\item It contains $\tp((b_\eta)_{\eta \in \T_{\alpha+1} \setminus \{\emptyset\}}/M)$. This is because $(b_{\rhd \langle i \rangle})_{i < \omega}$ forms an $M$-indiscernible sequence, and by the s-indiscernibility of the individual trees in this sequence.
\item The $\EM_s$-type specifies that the type of the root is $\tp(a/M)$.
\end{enumerate}
We apply \thref{tree-modelling}(i) to find an s-indiscernible tree $(a^{\alpha+1}_\eta)_{\eta \in \T_{\alpha+1}}$ over $M$ that is $\EM_s$-based over $M$ on $(b_\eta)_{\eta \in \T_{\alpha+1}}$. By an automorphism and (i) we may assume that $a^{\alpha+1}_{\langle i \rangle^\frown \eta} = b_{\langle i \rangle^\frown \eta} = a^\alpha_{\eta,i}$ for all $\eta \in \T_\alpha$ and $i < \omega$, and so (3) is satisfied. This then also implies that (2) is satisfied and (1) is satisfied by (ii), completing the inductive construction.

We thus have constructed a tree $(a^\lambda_\eta)_{\eta \in \T_\lambda}$ that is spread out and s-indiscernible over $M$ with $a^\lambda_\eta \equiv_M a$ for all $\eta \in \T_\lambda$. We can now apply \thref{base-morley-tree-on-s-indiscernible-tree} to find a Morley tree $(a_\eta)_{\eta \in \T_\omega}$ that is str-Ls-based on $(a^\lambda_\eta)_{\eta \in \T_\lambda}$ over $M$. In particular $a_\eta \equiv_M a$ for all $\eta \in \T_\omega$, and so by an automorphism we may assume $a_{\zeta_0} = a$. Then setting $a_i = a_{\zeta_i}$ for all $i < \omega$ we obtain the required tree Morley sequence $(a_i)_{i < \omega}$.
\end{proof}

\section{The Independence Theorem}
\label{sec:independence-theorem}
We now give a new proof of the Independence Theorem \cite[Theorem 7.7]{dobrowolski_kim-independence_2022}. The statement remains exactly the same. The proof is essentially that of \cite[Theorem 6.5]{kaplan_kim-independence_2020}, with \thref{ramsey-fix} mixed in.

\begin{definition}[{\cite[Definition 7.3]{dobrowolski_kim-independence_2022}}]
We write $a \ind^*_M b$ to mean that $\Lstp(a/Mb)$ extends to a global $M$-Ls-invariant type $\tp(N/\MM)$ for some $\beth_\omega(\lambda_T + |Mab|)$-saturated $N \supseteq M$. Extending $\Lstp(a/Mb)$ here means that there is some $\alpha \in N$ with $\alpha \equivls_{Mb} a$.
\end{definition}
We note that the relation $\ind^*$ is really invariant under automorphisms (and not just under Lascar strong automorphisms). Suppose that $q(x)$ is a global $M$-Ls-invariant type witnessing $a \ind^*_M b$ and $f$ is some automorphism of the monster, then $f(q)$ witnesses $f(a) \ind^*_{f(M)} f(b)$.
\begin{fact}[Weak Independence Theorem, {\cite[Proposition 7.5]{dobrowolski_kim-independence_2022}}]
\thlabel{weak-independence-theorem}
Let $T$ be a thick NSOP$_1$ theory. Suppose that $a \equivls_M a'$, $a \ind^K_M b$, $a' \ind^K_M c$ and $b \ind^*_M c$. Then there is $a''$ with $a'' \equivls_{Mb} a$, $a'' \equivls_{Mc} a'$ and $a'' \ind^K_M bc$.
\end{fact}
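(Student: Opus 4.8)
This is \cite[Proposition 7.5]{dobrowolski_kim-independence_2022}; the plan is to reconstruct its proof along the lines of the weak independence theorem in \cite{kaplan_kim-independence_2020}. The two working tools are the chain condition (\thref{chain-condition}) and the possibility --- guaranteed by $b \ind^*_M c$ --- of spreading $b$ out into a Morley sequence over $Mc$.

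First, from $b \ind^*_M c$ I would extract a global $M$-Ls-invariant type $q(y)$ with $b \models q|_{Mc}$ (using that $M$ is a model and that $M$-Ls-invariance is preserved under Lascar strong automorphisms). By \thref{useful-facts}(v), $q$ is also $Mc$-Ls-invariant, so a Morley sequence $(b_i)_{i < \omega}$ in $q$ over $Mc$ with $b_0 = b$ is simultaneously a Morley sequence over $M$, and since $q \supseteq \tp(b/Mc)$ every term satisfies $b_i \equivls_{Mc} b$, hence $cb_i \equivls_M cb$. Next, I would feed $a \ind^K_M b$ and the Morley sequence $(b_i)_{i < \omega}$ into the chain condition (whose proof in fact produces the realisation needed): one obtains $a_*$ with $a_* \equiv_{Mb} a$, with $(b_i)_{i < \omega}$ being $Ma_*$-indiscernible, and with $a_* \ind^K_M (b_i)_{i < \omega}$; in particular $a_* b_i \equivls_M a_* b$ for all $i$ and $a_* \equivls_M a \equivls_M a'$.

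The crucial third step is amalgamation. Using an automorphism over $M$ one brings $a'$, together with $c$, into the configuration built around $(b_i)_{i < \omega}$; then, exploiting that the $b_i$ are mutually generic copies of $b$ over $Mc$ while $a_*$ realises $\tp(a/Mb_i)$ for every $i$, one extracts by a finite-satisfiability/compactness argument along the Morley sequence a single $a''$ realising (the suitable identifications of) $\tp(a/Mb)$ and $\tp(a'/Mc)$, with $a'' \ind^K_M bc$ verified by Kim's lemma for Morley sequences. Undoing the automorphisms gives $a''$ with $a'' \equivls_{Mb} a$, $a'' \equivls_{Mc} a'$ and $a'' \ind^K_M bc$.

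The amalgamation is the main obstacle: realising the two halves $\tp(a/Mb)$ and $\tp(a'/Mc)$ simultaneously while retaining Kim-independence from $bc$ is exactly the substance of the independence theorem, and the automorphisms aligning $a$ with $a'$ must be chosen so as not to damage the independence and indiscernibility data attached to $(b_i)_{i < \omega}$. A pervasive secondary difficulty is upgrading equalities of types over the e.c.\ model $M$ to equalities of Lascar strong types; this is done using thickness and the distances $\d_M$ as in \thref{ls-invariance-type-definable}, possibly after passing to a $\lambda_T$-saturated extension of $M$ via \thref{useful-facts}(ii).
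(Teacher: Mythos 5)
Your proposal does not actually prove the statement: the step that carries all the weight is missing. Note first that in this corrigendum the Weak Independence Theorem is quoted as a black-box Fact, with its proof residing in \cite[Proposition 7.5]{dobrowolski_kim-independence_2022}, so there is no proof here to compare against line by line; judged on its own terms, your attempt consists of a correct opening and then a placeholder where the argument should be. The opening is fine and matches the standard Kaplan--Ramsey-style setup: from $b \ind^*_M c$ one extracts a global $M$-Ls-invariant type $q$ whose realisations are $\equivls_{Mc} b$, takes a Morley sequence $(b_i)_{i<\omega}$ in $q$ with $b_0 = b$, and applies the chain condition (\thref{chain-condition}) to $a \ind^K_M b$ to obtain $a_*$ with $a_* \equiv_{Mb} a$, $a_* \ind^K_M (b_i)_{i<\omega}$ and $(b_i)_{i<\omega}$ being $Ma_*$-indiscernible. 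But your ``crucial third step'' is precisely the content of the proposition: one must show that $\tp(a/Mb) \cup \tp(a'/Mc)$ (with the Lascar-strong-type refinements) is realised by some $a''$ with $a'' \ind^K_M bc$. You reduce this to an unspecified ``finite-satisfiability/compactness argument along the Morley sequence'' followed by an appeal to Kim's lemma, and then concede that this amalgamation ``is exactly the substance of the independence theorem''. Kim's lemma cannot be invoked until one has produced a suitable Morley (or tree Morley) sequence of copies of the pair $bc$ together with a proof that the union of the corresponding copies of the two types is consistent; constructing that configuration is exactly what is not done, and no argument is given even for plain consistency of the two halves.

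There is also a substantive reason the vague step would not go through as phrased. ``Finite satisfiability'' is not available in this setting: $\ind^*_M$ only supplies a global $M$-Ls-invariant type, not a coheir, so the first-order trick of pulling the parameters of finite fragments down into $M$ does not transfer; the actual proof of \cite[Proposition 7.5]{dobrowolski_kim-independence_2022} has to run the amalgamation through Ls-invariance, thickness and the $\d_M$-distances (the same apparatus as in \thref{ls-invariance-type-definable}), and this is where the real work lies. Relatedly, the conclusion demands $a'' \equivls_{Mb} a$ and $a'' \equivls_{Mc} a'$, whereas the chain condition only hands you equality of ordinary types over $Mb$; upgrading these is not a cosmetic afterthought but part of what the cited proof arranges (and is the reason the present paper, in \thref{independence-theorem}, first replaces $b$ and $c$ by enumerations of $\lambda_T$-saturated e.c.\ models). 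As it stands, your proposal is a correct reduction of the problem to itself plus a correct list of the difficulties, not a proof.
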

\begin{lemma}
\thlabel{tree-generalised-weak-independence-theorem}
Let $T$ be a thick NSOP$_1$ theory. Suppose that $a \ind^K_M b$ and fix some cardinal $\kappa$. Suppose that $q(x, y) = \tp(N/\MM)$ is a global $M$-Ls-invariant type extending $\Lstp(b/M)$, where $N \supseteq M$ is $\beth_\omega(\lambda_T + |Mab| + |\T_\kappa|)$-saturated and the $x$ variable matches $b$. If $(b_\eta)_{\eta \in \T_\alpha}$, with $\alpha \leq \kappa$, is a tree that is spread out over $M$, such that for all $\eta \in \T_\alpha$ we have $b_\eta \equivls_M b$ and $b_\eta \models q(x)|_{M b_{\rhd \eta}}$, then, writing $p(x, b) = \tp(a/Mb)$,
\[
\bigcup_{\eta \in \T_\alpha} p(x, b_\eta)
\]
does not Kim-divide over $M$.
\end{lemma}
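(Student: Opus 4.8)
The plan is to induct on the tree height $\alpha \leq \kappa$, using the chain condition (\thref{chain-condition}) as the engine for each successor step and \thref{ramsey-fix} to maintain an s-indiscernible spread out tree along the way, much as in \cite[Theorem 6.5]{kaplan_kim-independence_2020}. For $\alpha = 0$ the tree is a single root $b_\emptyset \equivls_M b$, so $\bigcup_{\eta \in \T_0} p(x,b_\eta)$ is (an Ls-conjugate of) $p(x,b)$, which does not Kim-divide since $a \ind^K_M b$. Limit stages are handled by finite character of Kim-dividing together with the compatibility of the trees $(b_\eta)_{\eta \in \T_\ell} = \bigcup_{\beta < \ell}(b_\eta)_{\eta \in \T_\beta}$ under the canonical inclusions $\iota_{\beta\ell}$: any finite inconsistency would already live in some $\T_\beta$ with $\beta < \ell$.

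The successor step is the heart of the argument. Given a spread out tree $(b_\eta)_{\eta \in \T_\alpha}$ with the stated properties, we want to build one indexed by $\T_{\alpha+1}$. Recall that $\T_{\alpha+1}$ is obtained by placing $\omega$ copies of $\T_\alpha$ above a common root, so we need $\omega$ suitable copies of our tree forming a Morley sequence. Here I would first replace $(b_\eta)_{\eta \in \T_\alpha}$ by an s-indiscernible-over-$M$ tree with the same $\EM_s$-type (using \thref{tree-modelling}(i)); by \thref{spread-out-preserved}-style reasoning this preserves spread-outness, and the properties $b_\eta \equivls_M b$, $b_\eta \models q|_{Mb_{\rhd\eta}}$ are captured in the $\EM_s$-type (the latter by \thref{ls-invariance-type-definable}, since $q|_N$ is $M$-Ls-invariant). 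Then apply \thref{ramsey-fix} to obtain a Morley sequence $((b^i_\eta)_{\eta \in \T_\alpha})_{i<\omega}$ in a global $M$-Ls-invariant type $r$, with $(b^0_\eta)_{\eta\in\T_\alpha} = (b_\eta)_{\eta\in\T_\alpha}$, mutually s-indiscernible over $M$. Glue these together with a fresh root $b_\emptyset = b$, realising $q$ over the rest, to define $(b_\eta)_{\eta\in\T_{\alpha+1}}$; after an s-modelling step (keeping the root's type and the tree below it, as in the construction in \thref{tree-morley-sequences-exist}) this is s-indiscernible and spread out over $M$, and still satisfies the two pointwise conditions.

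Now the induction hypothesis, applied to the single tree $(b^0_\eta)_{\eta\in\T_\alpha}$ (equivalently to each $(b^i_\eta)_{\eta\in\T_\alpha}$), tells us that $\bigcup_{\eta\in\T_\alpha} p(x, b^0_\eta)$ does not Kim-divide over $M$; fix $a'$ realising it with $a' \ind^K_M (b^0_\eta)_{\eta\in\T_\alpha}$. Since $((b^i_\eta)_{\eta\in\T_\alpha})_{i<\omega}$ is a Morley sequence in the global $M$-Ls-invariant type $r$, and $a' \ind^K_M (b^0_\eta)_{\eta\in\T_\alpha}$, the chain condition \thref{chain-condition} (with the tuple $(b^0_\eta)_{\eta\in\T_\alpha}$ in the role of $b$) gives that $\bigcup_{i<\omega}\bigl(\bigcup_{\eta\in\T_\alpha} p(x,b^i_\eta)\bigr) = \bigcup_{\eta\in\T_{\alpha+1}\setminus\{\emptyset\}} p(x,b_\eta)$ does not Kim-divide over $M$. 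It remains to also absorb the root: let $a''$ realise this union with $a'' \ind^K_M (b_\eta)_{\eta\in\T_{\alpha+1}\setminus\{\emptyset\}}$; we must find a realisation of the full $\bigcup_{\eta\in\T_{\alpha+1}} p(x,b_\eta)$ that is still $\ind^K_M$-independent from the whole tree. This is where the Weak Independence Theorem \thref{weak-independence-theorem} enters: we have $a \ind^K_M b_\emptyset$ (as $b_\emptyset \equivls_M b$), $a'' \ind^K_M (b_\eta)_{\eta\rhd\emptyset}$, and the $\ind^*_M$-independence of $b_\emptyset$ from the subtree is exactly what the saturation hypothesis on $N$ and the condition $b_\emptyset \models q|_{M b_{\rhd\emptyset}}$ deliver; \thref{weak-independence-theorem} then amalgamates $a$ and $a''$ into a single $a^*$ realising $p(x,b_\emptyset) \cup \bigcup_{\eta\rhd\emptyset} p(x,b_\eta)$ with $a^* \ind^K_M (b_\eta)_{\eta\in\T_{\alpha+1}}$, completing the successor step.

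The main obstacle I anticipate is bookkeeping the saturation parameter across the induction: each application of \thref{ramsey-fix} and of \thref{weak-independence-theorem} demands a sufficiently saturated model, and we must check the single type $q$ chosen at the outset (over an $N$ that is $\beth_\omega(\lambda_T + |Mab| + |\T_\kappa|)$-saturated) remains adequate as $\alpha$ climbs to $\kappa$, and that the s-modelling/$\EM_s$ steps do not destroy either spread-outness or the pointwise conditions $b_\eta \equivls_M b$ and $b_\eta \models q|_{Mb_{\rhd\eta}}$ — the latter is subtle because it is a condition relative to the varying "rest of the tree", not an absolute type, so one has to phrase it carefully as part of the s-indiscernible data (this is precisely the kind of point that \thref{ramsey-fix} is designed to handle, as noted in \thref{rem:missing-ingredient-kaplan-ramsey}). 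A secondary technical point is verifying at the successor step that the glued tree's $\EM_s$-type really does force the branch condition relative to $q$; this follows from mutual s-indiscernibility over $M$ and $M$-Ls-invariance of $q$, but needs to be spelled out.
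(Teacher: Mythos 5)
There is a genuine structural gap in your successor step. The lemma is universally quantified over trees: to carry out the induction you must take an \emph{arbitrary} tree $(b_\eta)_{\eta\in\T_{\alpha+1}}$ that is spread out over $M$ with $b_\eta \equivls_M b$ and $b_\eta \models q(x)|_{M b_{\rhd\eta}}$ for all $\eta$, and show that $\bigcup_{\eta\in\T_{\alpha+1}} p(x,b_\eta)$ does not Kim-divide. Instead you start from a $\T_\alpha$-tree and \emph{construct} a $\T_{\alpha+1}$-tree (s-modelling, \thref{ramsey-fix}, a fresh root realising $q$, another $\EM_s$-basing step), and prove the non-Kim-dividing only for the trees you build. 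That yields an existential variant of the statement, not the lemma itself; and it is the lemma as stated, applied to the particular trees $(d_\eta)_{\eta\in\T_{\alpha+1}}$ produced in the Zig-Zag lemma, that is needed later. The construction machinery you import belongs to that later application, and it carries exactly the verification problems you flag but do not resolve (preservation of spread-outness and of the relative condition $b_\eta \models q(x)|_{M b_{\rhd\eta}}$ under $\EM_s$-basing); in a correct proof of the present lemma none of this arises.

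The repair is to delete the construction: the hypotheses already hand you everything it was meant to arrange. Given $(b_\eta)_{\eta\in\T_{\alpha+1}}$, the subtree above $\langle 0\rangle$ satisfies the hypotheses for $\T_\alpha$, so the induction hypothesis applies to it; spread-outness of the \emph{given} tree says that $(b_{\unrhd\langle i\rangle})_{i<\omega}$ is already a Morley sequence in some global $M$-Ls-invariant type, so \thref{chain-condition} applies directly, with no appeal to \thref{ramsey-fix}; and $b_\emptyset \models q(x)|_{M b_{\rhd\emptyset}}$ together with the saturation assumption on $N$ gives $b_\emptyset \ind^*_M b_{\rhd\emptyset}$, so \thref{weak-independence-theorem} absorbs the root. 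This is precisely your own amalgamation mechanism (induction hypothesis, chain condition, Weak Independence Theorem), which coincides with the paper's, so the mathematical core is right; only the framing as a construction breaks the induction. One further point you pass over: \thref{weak-independence-theorem} requires the two realisations being amalgamated to have the same \emph{Lascar} strong type over $M$, and this is where the hypothesis $b_\eta \equivls_M b$ is actually used; it should be made explicit rather than left implicit in the phrase ``WIT then amalgamates''.
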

\begin{proof}
We follow the proof of \cite[Lemma 6.2]{kaplan_kim-independence_2020}, replacing their use of \cite[Proposition 6.1]{kaplan_kim-independence_2020} by \thref{weak-independence-theorem}. The proof is by induction on $\alpha$. For $\alpha = 0$ there is nothing to do, and limit stages follow from the induction hypothesis by finite character. Now suppose that $(b_\eta)_{\eta \in \T_{\alpha + 1}}$ is as in the statement. By the induction hypothesis we have that
\[
\bigcup_{\eta \unrhd \langle 0 \rangle} p(x, b_\eta)
\]
does not Kim-divide over $M$. Because $(b_\eta)_{\eta \in \T_{\alpha + 1}}$ is spread out we have that $(b_{\unrhd \langle i \rangle})_{i < \omega}$ is a Morley sequence in some global $M$-Ls-invariant type. By the chain condition \thref{chain-condition} we then have that
\[
\bigcup_{i < \omega} \bigcup_{\eta \unrhd \langle i \rangle} p(x, b_\eta)
\]
does not Kim-divide over $M$. At the same time we have $b_\emptyset \models q(x)|_{M b_{\rhd \emptyset}}$ and so by our assumptions on $q$ we have $b_\emptyset \ind^*_M  b_{\rhd \emptyset}$. Using that $p(x, b_\emptyset)$ does not Kim-divide (because $a \ind^K_M b$), we can apply the Weak Independence Theorem (\thref{weak-independence-theorem}) to see that
\[
p(x, b_\emptyset) \cup \bigcup_{i < \omega} \bigcup_{\eta \unrhd \langle i \rangle} p(x, b_\eta)
\]
does not Kim-divide (here we implicitly used the assumption that $b_\eta \equivls_M b$ for all $\eta \in \T_{\alpha+1})$. Unfolding definitions, this is exactly saying that
\[
\bigcup_{\eta \in \T_{\alpha + 1}} p(x, b_\eta),
\]
does not Kim-divide, completing the induction step and thereby the proof.
\end{proof}
\begin{lemma}[Zig-Zag lemma]
\thlabel{zig-zag}
Let $T$ be a thick NSOP$_1$ theory. Suppose that $b \ind^K_M c$. Then there is a global $M$-Ls-invariant type $q(x,y) = \tp(N/\MM)$, where $N \supseteq M$ is some $\beth_\omega(\lambda_T + |Mbc|)$-saturated model and $q(x)$ extends $\tp(b/M)$, and a tree Morley sequence $(b_i, c_i)_{i < \omega}$ over $M$ such that:
\begin{enumerate}[label=(\roman*)]
\item if $i \leq j$ then $b_i c_j \equiv_M bc$,
\item if $i > j$ then $b_i \models q(x)|_{M c_j}$.
\end{enumerate}
\end{lemma}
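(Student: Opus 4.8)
The plan is to mirror the zig-zag construction buried inside the proof of \cite[Theorem 6.5]{kaplan_kim-independence_2020}, with \thref{ramsey-fix} and its consequences replacing the first-order tree machinery. First I would set up the invariant types. Using \thref{useful-facts}(i),(iv) together with \thref{ls-invariance-type-definable} and \thref{extend-ls-invariant-type}, fix an e.c.\ model $N \supseteq M$ that is $\beth_\omega(\lambda_T + |Mbc|)$-saturated and such that $\tp(N/\MM)$ is $M$-Ls-invariant, arranged so that $N$ contains elements realising $\tp(b/M)$, $\tp(c/M)$ and $\tp(bc/M)$ (take $N$ obtained by realising the $M$-Ls-invariant extension of $\tp(P/M)$ for a suitably saturated $P \supseteq M$ carrying witnesses of these types), and put $q(x,y) = \tp(N/\MM)$ with $x$ matching the $\tp(b/M)$-realisation $\beta$. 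Since $b \ind^K_M c$ gives $c \ind^K_M b$, hence (by Kim's lemma / \cite{dobrowolski_kim-independence_2022}) $\Lstp(c/Mb)$ extends to a global $M$-Ls-invariant type $r(y)$, I would also realise such an $r$ inside a suitably saturated model; this is the source of "fresh copies of $c$ matching $b$". For the induction itself I would work with more highly saturated auxiliary copies of $q$ and $r$, recording the small $q$ only at the end.

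Next I would construct, by transfinite induction up to some large $\mu$, a sequence $(b_i,c_i)_{i<\mu}$ such that $b_i c_j \equiv_M bc$ for $i\le j$, that $b_i \models q(x)|_{Mc_j}$ for $i>j$, and that $(b_i,c_i) \ind^{iLs}_M (b_lc_l)_{l<i}$. At stage $i$, first add $b_i$ realising $\Lstp(\beta/M(b_lc_l)_{l<i})$; this at once yields $b_i \models q(x)|_{Mc_l}$ for $l<i$ and $b_i \ind^*_M (b_lc_l)_{l<i}$. Then add $c_i$ via the Weak Independence Theorem \thref{weak-independence-theorem}, gluing a fresh (via $r$) copy of $c$ that matches $b_{<i}$ — a common match of $\{b_l : l<i\}$ exists by finite character, since each $c_l$ matches $b_0,\dots,b_l$, and it can be taken $\ind^{iLs}_M$-generic over $(c_l)_{l<i}$ — with a fresh copy of $c$ matching $b_i$; the $\ind^*$-hypothesis of \thref{weak-independence-theorem} is exactly the genericity of $b_i$ over the past just noted. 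The resulting $c_i$ matches $b_0,\dots,b_i$ (the zig-zag twist) and satisfies $c_i \ind^K_M b_{\le i}(c_l)_{l<i}$, hence $c_i \ind^{iLs}_M b_{\le i}(c_l)_{l<i}$; combined with the $b_i$-step and the fact that a product of global $M$-Ls-invariant types is again $M$-Ls-invariant (a finite-character consequence of \thref{ls-invariance-type-definable}), this gives $(b_i,c_i) \ind^{iLs}_M (b_lc_l)_{l<i}$. Limit stages are handled identically, with $\{b_l : l<\ell\}$ in place of $\{b_0,\dots,b_i\}$.

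Finally I would extract from $(b_i,c_i)_{i<\mu}$ an $M$-indiscernible sequence $(b_i^\sharp,c_i^\sharp)_{i<\omega}$ of pairs, over a long enough base that the relevant windows become $\equivls_M$-equivalent rather than merely $\equiv_M$-equivalent. By \thref{indiscernible-invariant-independent-sequence-is-morley} the $\ind^{iLs}_M$-genericity persists, so $(b_i^\sharp,c_i^\sharp)_{i<\omega}$ is a Morley sequence in a global $M$-Ls-invariant type, and therefore a tree Morley sequence over $M$. Property (i) survives extraction, being a statement about the pairs $(b_i^\sharp,c_j^\sharp)$ which lies in the $\EM$-type; property (ii) survives because $q(x)$ is $M$-Ls-invariant, so from $b_i^\sharp c_j^\sharp \equivls_M b_l c_{l'}$ for a suitable $l>l'$ with $b_l \models q(x)|_{Mc_{l'}}$ we get $b_i^\sharp \models q(x)|_{Mc_j^\sharp}$. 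Since $q(x)$ extends $\tp(b/M)$ and $N$ is $\beth_\omega(\lambda_T+|Mbc|)$-saturated, all the requirements of the statement hold.

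The hard part will be the $c_i$-step: at each stage, and especially at limits, one must produce a single fresh copy of $c$ that simultaneously matches all the earlier $b_l$ while staying generic enough over the earlier $c_l$ for $(b_i,c_i)$ to be $\ind^{iLs}_M$-independent from the past. This is precisely where $b \ind^K_M c$ and \thref{weak-independence-theorem} are needed, and securing the $\ind^*$-hypothesis of \thref{weak-independence-theorem} at each step is what forces the bookkeeping with the highly saturated auxiliary invariant types. I also expect that the step "Morley sequence in a global $M$-Ls-invariant type $\Rightarrow$ tree Morley sequence" used above needs a short separate argument unwinding \thref{spread-out-tree} (exhibiting the comb tree of the Morley sequence as a Morley tree).
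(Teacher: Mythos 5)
Your overall architecture is genuinely different from the paper's: you build a long \emph{linear} sequence $(b_i,c_i)_{i<\mu}$ with the zig-zag conditions plus $\ind^{iLs}_M$-increments and then extract, whereas the paper never produces an invariant Morley sequence at all — it builds a spread-out, s-indiscernible tree of height $\lambda$ (using \thref{ramsey-fix} to duplicate the whole tree at successor stages and \thref{tree-generalised-weak-independence-theorem} to attach a new root pair), extracts a Morley tree via \thref{base-morley-tree-on-s-indiscernible-tree}, and takes a branch. To your credit, the extraction step itself is not where your plan would break: unlike the situation described in Section \ref{sec:the-gap}, your properties (i) and (ii) are conditions on arbitrary pairs of indices, so they do survive basing over a $\lambda_T$-saturated model, and $\ind^{iLs}$ has the finite character and $\equiv_M$-invariance needed to transfer increments.

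The fatal step is the repeated inference from Kim-independence to the existence of invariant extensions. You write that $c \ind^K_M b$ yields a global $M$-Ls-invariant extension $r(y)$ of $\Lstp(c/Mb)$, and later that $c_i \ind^K_M b_{\leq i}(c_l)_{l<i}$ (which is all that \thref{weak-independence-theorem} can return) gives $c_i \ind^{iLs}_M b_{\leq i}(c_l)_{l<i}$. Neither implication holds: $\ind^{iLs}$ and $\ind^*$ imply $\ind^K$, but not conversely — non-Kim-forking of a type gives no global $M$-Ls-invariant extension of it. Nor can you repair this by choosing the ``common match'' $c^-$ of $b_{<i}$ to be $\ind^{iLs}$-generic over the past: extension for $\ind^K$ provides a realisation of the non-Kim-dividing partial type $\bigcup_{l<i}p(z,b_l)$ that is Kim-independent from the past, but no tool in the paper produces a realisation whose type over the past is $M$-Ls-invariant, and that is essentially as strong as what you are trying to prove. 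Without the $\ind^{iLs}$-increments, \thref{indiscernible-invariant-independent-sequence-is-morley} does not apply and your extracted sequence is just an indiscernible sequence with $\ind^K$-increments, which you cannot identify as a tree Morley sequence; with them, you would be constructing an \emph{invariant} Morley sequence with the zig-zag properties — a substantially stronger statement than the lemma, and precisely the thing whose general unavailability is the reason tree Morley sequences exist. The paper's tree construction is designed around this obstruction: all invariance requirements are imposed ``horizontally'', between sibling copies of the entire tree supplied by \thref{ramsey-fix} (where they are cheap, being types over $M$ alone), while along branches only non-Kim-dividing is demanded, supplied by \thref{tree-generalised-weak-independence-theorem}; the branch of the resulting Morley tree is then a tree Morley sequence without ever being Morley in an invariant type. (Your final flagged step — that an invariant Morley sequence is a tree Morley sequence — is true but moot, since the construction feeding it cannot be carried out.)
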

\begin{proof}
We basically verify that the proof of \cite[Lemma 6.4]{kaplan_kim-independence_2020} goes through, while fixing a gap by mixing in a use of \thref{ramsey-fix} (see also \thref{rem:missing-ingredient-kaplan-ramsey}).

Let $\lambda$ be the cardinal from \thref{base-morley-tree-on-s-indiscernible-tree}, where the $C$ and $\kappa$ are $M$ and $|bc|$ respectively. Let $N \supseteq Mb$ be $\beth_\omega(|\T_\lambda|)$-saturated (note that $|\T_\lambda| \geq \lambda_T + |Mbc|$). Let $q(x, y)$ be a global $M$-Ls-invariant extension of $\Lstp(N/M)$, where the $x$ variable matches $b$. In particular, for $\beta \models q(x)$ we have $\beta \equivls_M b$. We write $p(z, b) = \tp(c/Mb)$. By induction on $\alpha \leq \lambda$ we will construct trees $(b^\alpha_\eta, c^\alpha_\eta)_{\eta \in \T_\alpha}$, such that:
\begin{enumerate}[label=(\arabic*)]
\item for all $\eta \in \T_\alpha$ we have $b^\alpha_\eta \models q(x)|_{M b^\alpha_{\rhd \eta} c^\alpha_{\rhd \eta}}$ and $b_\eta \equivls_M b$,
\item for all $\eta \in \T_\alpha$ we have $c^\alpha_\eta \models \bigcup_{\nu \unrhd \eta} p(z, b^\alpha_\nu)$,
\item the tree $(b^\alpha_\eta, c^\alpha_\eta)_{\eta \in \T_\alpha}$ is spread out and s-indiscernible over $M$,
\item for all $\beta < \alpha$ we have $b^\alpha_{\iota_{\beta \alpha}(\eta)} c^\alpha_{\iota_{\beta \alpha}(\eta)} = b^\beta_\eta c^\beta_\eta$ for all $\eta \in \T_\beta$.
\end{enumerate}
We start by setting $b^0_\emptyset c^0_\emptyset = bc$. For a limit stage $\ell$, we set $b^\ell_{\iota_{\beta \ell}(\eta)} c^\ell_{\iota_{\beta \ell}(\eta)} = b^\beta_\eta c^\beta_\eta$, where $\beta$ ranges over all ordinals $< \ell$ and $\eta$ ranges over all elements in $\T_\beta$. This is well-defined by property (4), and properties (1)--(3) then follow immediately from the induction hypothesis.

For the successor step we suppose $(b^\alpha_\eta, c^\alpha_\eta)_{\eta \in \T_\alpha}$ has been constructed. Using \thref{ramsey-fix} we find a Morley sequence $((b^\alpha_{\eta,i}, c^\alpha_{\eta,i})_{\eta \in \T_\alpha})_{i < \omega}$ in some global $M$-Ls-invariant type with $(b^\alpha_{\eta,0}, c^\alpha_{\eta,0})_{\eta \in \T_\alpha} = (b^\alpha_\eta, c^\alpha_\eta)_{\eta \in \T_\alpha}$ that is mutually s-indiscernible over $M$. Define a tree $(d_\eta, e_\eta)_{\eta \in \T_{\alpha+1}}$ by setting $d_{\langle i \rangle^\frown \eta} e_{\langle i \rangle^\frown \eta} = b^\alpha_{\eta,i} c^\alpha_{\eta,i}$ for all $\eta \in \T_\alpha$ and $i < \omega$. This leaves us to define $d_\emptyset$ and $e_\emptyset$. Let $\beta \models q(x)$ and pick $d_\emptyset$ such that
\[
d_\emptyset \equivls_{M d_{\rhd \emptyset} e_{\rhd \emptyset}} \beta.
\]
We can then apply \thref{tree-generalised-weak-independence-theorem} to the tree $(d_\eta)_{\T_{\alpha+1}}$ to see that
\[
\bigcup_{\eta \in \T_{\alpha+1}} p(z, d_\eta)
\]
does not Kim-divide over $M$. In particular, this set is consistent and so we can let $e_\emptyset$ be a realisation of this set. The $\EM_s$-type of $(d_\eta, e_\eta)_{\eta \in \T_{\alpha+1}}$ over $M$ satisfies the following properties.
\begin{enumerate}[label=(\roman*)]
\item It contains $\tp((d_\eta, e_\eta)_{\eta \in \T_{\alpha+1} \setminus \{\emptyset\}}/M)$. This is because $(d_{\rhd \langle i \rangle}, e_{\rhd \langle i \rangle})_{i < \omega}$ forms an $M$-indiscernible sequence, and by the s-indiscernibility of the individual trees in this sequence.
\item It contains the type $r(x_\emptyset, (x_\eta)_{\eta \rhd \emptyset}, (z_\eta)_{\eta \rhd \emptyset}) = \tp(d_\emptyset, d_{\rhd \emptyset}, e_{\rhd \emptyset}/M)$, and note that by construction $r(x, d_{\rhd \emptyset}, e_{\rhd \emptyset}/M) = q(x)|_{M d_{\rhd \emptyset} e_{\rhd \emptyset}}$. Indeed, let $\bar{\eta}$ and $\bar{\nu}$ be two finite tuples in $\T_{\alpha+1}$ with the same s-quantifier free type that do not contain the root. Then we have $d_{\bar{\eta}} e_{\bar{\eta}} \equivls_M d_{\bar{\nu}} e_{\bar{\nu}}$, see (i) for the justification. The claim then follows from $M$-Ls-invariance of $q$.
\item It captures that $d_\emptyset \equivls_M d_{\langle i \rangle}$ for all $i < \omega$. By construction we have $\d_M(d_\emptyset, d_{\langle 0 \rangle}) \leq n$ for some $n < \omega$, so $\d_M(d_\emptyset, d_{\langle i \rangle}) \leq n + 1$ for all $i < \omega$. By thickness $\d_M(x_\emptyset, x_{\langle i \rangle}) \leq n + 1$ is type-definable over $M$, and this partial type is thus contained in the $\EM_s$-type.
\item It captures that $e_\emptyset \models \bigcup_{\nu \unrhd \emptyset} p(z, d_\nu)$.
\end{enumerate}
We apply \thref{tree-modelling}(i) to find an s-indiscernible tree $(b^{\alpha+1}_\eta, c^{\alpha+1}_\eta)_{\eta \in \T_{\alpha+1}}$ over $M$ that is $\EM_s$-based over $M$ on $(d_\eta, e_\eta)_{\eta \in \T_{\alpha+1}}$. By an automorphism and (i) we may assume that $b^{\alpha+1}_{\langle i \rangle^\frown \eta} c^{\alpha+1}_{\langle i \rangle^\frown \eta} = d_{\langle i \rangle^\frown \eta} e_{\langle i \rangle^\frown \eta} = b^\alpha_{\eta,i} c^\alpha_{\eta,i}$ for all $\eta \in \T_\alpha$ and $i < \omega$, and so (4) is satisfied. This then also implies that (3) is satisfied. Finally, (1) is satisfied because of (ii) and (iii) and (2) is satisfied because of (iv), in both cases combined with the induction hypothesis. This completes the inductive construction.

We thus have constructed a tree $(b^\lambda_\eta, c^\lambda_\eta)_{\eta \in \T_\lambda}$ satisfying (1)--(3). We now apply \thref{base-morley-tree-on-s-indiscernible-tree} to find a Morley tree $(b_\eta, c_\eta)_{\eta \in \T_\omega}$ over $M$ str-Ls-based on $(b^\lambda_\eta, c^\lambda_\eta)_{\eta \in \T_\lambda}$ over $M$. Property (2) is clearly preserved under str-Ls-basing. To see that property (1) is preserved under str-Ls-basing we show that, for any $\eta \in \T_\omega$ and finite tuple $\bar{\nu}$ in $\T_\omega$, we have $b_\eta \models q(x)|_{M b_{\bar{\nu}} c_{\bar{\nu}}}$. Indeed, by str-Ls-basing we find $\gamma, \bar{\mu} \in \T_\omega$ such that $\gamma \bar{\mu}$ has the same str-quantifier-free type as $\eta \bar{\nu}$ and $b_\eta b_{\bar{\nu}} c_{\bar{\nu}} \equivls_M b^\lambda_\gamma b^\lambda_{\bar{\mu}} c^\lambda_{\bar{\mu}}$. Let $\beta \models q(x)$, then we have by $M$-Ls-invariance of $q(x)$ that
\[
b_\eta b_{\bar{\nu}} c_{\bar{\nu}} \equivls_M b^\lambda_\gamma b^\lambda_{\bar{\mu}} c^\lambda_{\bar{\mu}} \equiv \beta b^\lambda_{\bar{\mu}} c^\lambda_{\bar{\mu}} \equivls_M \beta b_{\bar{\nu}} c_{\bar{\nu}},
\]
as required. So setting $(b_i, c_i) = (b_{\zeta_i}, c_{\zeta_i})$ for all $i < \omega$ we find our desired tree Morley sequence.
\end{proof}
\begin{theorem}[Independence Theorem]
\thlabel{independence-theorem}
Let $T$ be a thick NSOP$_1$ theory. Suppose that $a \equivls_M a'$, $a \ind^K_M b$, $a' \ind^K_M c$ and $b \ind^K_M c$. Then there is $a''$ with $a'' \equivls_{Mb} a$, $a'' \equivls_{Mc} a'$ and $a'' \ind^K_M bc$.
\end{theorem}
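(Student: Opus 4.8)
The plan is to transcribe the first-order proof of the Independence Theorem, \cite[Theorem 6.5]{kaplan_kim-independence_2020}, into the thick positive setting; all of its ingredients are now available, namely the Zig-Zag lemma \thref{zig-zag}, Kim's lemma for tree Morley sequences \thref{kims-lemma-tree-morley-sequences}, the chain condition \thref{chain-condition}, the tree-generalised weak independence statement \thref{tree-generalised-weak-independence-theorem}, and the Weak Independence Theorem \thref{weak-independence-theorem}.

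First I would apply \thref{zig-zag} to the hypothesis $b \ind^K_M c$, obtaining a global $M$-Ls-invariant type $q(x, y) = \tp(N/\MM)$ extending $\tp(b/M)$, with $N$ a $\beth_\omega(\lambda_T + |Mbc|)$-saturated model, together with a tree Morley sequence $(b_i c_i)_{i < \omega}$ over $M$ with $b_i c_j \equiv_M bc$ for $i \leq j$ and $b_i \models q(x)|_{M c_j}$ for $i > j$. Picking $f \in \Aut(\MM/M)$ with $f(b_0 c_0) = bc$ and replacing $(b_i c_i)_{i<\omega}$, $q$ and $N$ by their images under $f$, we may assume $b_0 c_0 = bc$. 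The essential consequence of the interlocking property is that $b_i \ind^*_M c_j$ whenever $i > j$: here $\tp(b_i/Mc_j)$ is the restriction of the global $M$-Ls-invariant type $q(x)$, which is witnessed by $N \supseteq M$, and $\beth_\omega(\lambda_T + |Mb_ic_j|) = \beth_\omega(\lambda_T + |Mbc|)$, so the saturation required in the definition of $\ind^*$ is met. This $\ind^*$-relation is exactly what the hypothesis $b \ind^K_M c$ fails to provide but which the Weak Independence Theorem needs; supplying it is the whole point of the zig-zag.

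Next I would work with the two ``strands'' of the sequence. By \thref{sub-tree-morley-sequences}(i) both $(b_i)_{i < \omega}$ and $(c_i)_{i < \omega}$ are tree Morley sequences over $M$, with $b_0 = b$ and $c_0 = c$. From $a \ind^K_M b$ the type $\tp(a/Mb)$ does not Kim-divide over $M$, so by \thref{kims-lemma-tree-morley-sequences} along $(b_i)_{i<\omega}$, combined with the chain condition \thref{chain-condition} (and \thref{tree-generalised-weak-independence-theorem}) run inside the ambient Morley tree, I would produce a realisation $a^*$ of a Lascar-strong copy of $\tp(a/Mb)$ with $a^* \equivls_M a$ that is Kim-independent over $M$ from the $b_i$; symmetrically, from $a' \ind^K_M c$ one obtains $\bar a \equivls_M a'$ realising $\tp(a'/Mc)$ and Kim-independent over $M$ from the $c_i$. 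Since $a^* \equivls_M a \equivls_M a' \equivls_M \bar a$, these strands can be amalgamated exactly as in \cite[Theorem 6.5]{kaplan_kim-independence_2020}: feeding $a^* \equivls_M \bar a$, $a^* \ind^K_M b_i$, $\bar a \ind^K_M c_0$ and $b_i \ind^*_M c_0$ (for $i \geq 1$) into the Weak Independence Theorem yields an amalgam living over $b_i c_0$, which one then transports back down to $b_0 c_0 = bc$ using the $M$-indiscernibility of the tree Morley sequence and a final application of \thref{kims-lemma-tree-morley-sequences}. This gives $a''$ with $a'' \equivls_{Mb} a$, $a'' \equivls_{Mc} a'$ and $a'' \ind^K_M bc$, as required.

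I expect the main obstacle to lie in that last manoeuvre. The skeleton of the argument is forced by Kaplan--Ramsey, but two features require genuine care in our setting. First, everything has to be tracked at the level of Lascar strong types, as the statement demands, which obliges us to use the Ls-refinements of the modelling and extraction machinery (\thref{base-morley-tree-on-s-indiscernible-tree}, \thref{ramsey-fix}) and to keep the saturation cardinals aligned so that $\ind^*$ and \thref{weak-independence-theorem} genuinely apply all along the zig-zag. Second, and most delicately, the amalgam delivered by the Weak Independence Theorem lives over some $b_i$ with $i \geq 1$ rather than over $b_0 = b$; descending to $b_0$ cannot be achieved by a single automorphism, since that would displace $c_0$, and must instead be carried out through the tree Morley sequence together with Kim's lemma for tree Morley sequences — which is precisely the scenario the zig-zag construction was set up to handle.
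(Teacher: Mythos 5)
Your skeleton matches the paper's proof (Zig-Zag, then the Weak Independence Theorem at an off-diagonal pair, then Kim's lemma for tree Morley sequences), but the decisive step is exactly the one you leave unargued. Having obtained that $p_0(x,b_i)\cup p_1(x,c_0)$ does not Kim-divide for some $i\geq 1$ (with $p_0(x,b)=\tp(a/Mb)$, $p_1(x,c)=\tp(a'/Mc)$), you say the amalgam is ``transported back down to $b_0c_0=bc$ using the $M$-indiscernibility of the tree Morley sequence and a final application of \thref{kims-lemma-tree-morley-sequences}'', and you flag this as the expected obstacle. That is not a proof: indiscernibility of the sequence of pairs $(b_ic_i)_{i<\omega}$ cannot move you from the configuration ``$b$-index $>$ $c$-index'' (where $\ind^*$ holds) to the configuration ``$b$-index $\leq$ $c$-index'' (where the pair has type $\tp(bc/M)$), and a single application of Kim's lemma does not do it either. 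The actual mechanism, which is the heart of the argument, is a two-step diagonal manoeuvre resting on \thref{sub-tree-morley-sequences} applied with \emph{both} parts (pairing consecutive blocks via (ii) and then passing to subtuples via (i)): first, $(b_{2i+1},c_{2i})_{i<\omega}$ is itself a tree Morley sequence over $M$ starting with $(b_1,c_0)$, so non-Kim-dividing of $p_0(x,b_1)\cup p_1(x,c_0)$ gives consistency of $\bigcup_{i<\omega}p_0(x,b_{2i+1})\cup p_1(x,c_{2i})$; second, the subfamily $\bigcup_{i<\omega}p_0(x,b_{2i+1})\cup p_1(x,c_{2i+2})$ is then consistent, and $(b_{2i+1},c_{2i+2})_{i<\omega}$ is again a tree Morley sequence whose first pair satisfies $b_1c_2\equiv_M bc$, so Kim's lemma applied in the other direction yields that $p_0(x,b)\cup p_1(x,c)$ does not Kim-divide over $M$. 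Without identifying this double use of \thref{sub-tree-morley-sequences} and the two opposite-direction applications of \thref{kims-lemma-tree-morley-sequences}, the proposal stops precisely where the proof has to start.

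Two further points. First, non-Kim-dividing of $p_0(x,b)\cup p_1(x,c)$ only gives $a''$ with $a''\equiv_{Mb}a$ and $a''\equiv_{Mc}a'$; to upgrade these to Lascar strong type equalities you need the reduction used at the start of the paper's proof (and of the original Theorem 7.7), namely that one may assume $b$ and $c$ enumerate $\lambda_T$-saturated e.c.\ models containing $M$ -- you note the Lascar issue but do not supply this step. Second, your intermediate paragraph invoking the chain condition and \thref{tree-generalised-weak-independence-theorem} ``inside the ambient Morley tree'' to produce $a^*$ and $\bar a$ is superfluous: those tools belong to the proof of the Zig-Zag lemma itself, and in the final argument one only needs to choose $a''$ with $a''c_0\equivls_M a'c$ (possible since $c_0\equivls_M c$) and feed $a$, $a''$, $b_1$, $c_0$ directly into \thref{weak-independence-theorem}.
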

\begin{proof}
We now have all the tools in place to follow the proof of \cite[Theorem 6.5]{kaplan_kim-independence_2020}. To get our conclusion about Lascar strong types, we apply the same trick as at the start of \cite[Theorem 7.7]{dobrowolski_kim-independence_2022}: as described there we may assume $b$ and $c$ to enumerate $\lambda_T$-saturated e.c.\ models containing $M$. So we have reduced our goal to proving that, for $p_0(x, b) = \tp(a/Mb)$ and $p_1(x, c) = \tp(a'/Mc)$, the partial type $p_0(x, b) \cup p_1(x, c)$ does not Kim-divide over $M$.

Let $(b_i, c_i)_{i < \omega}$ and $q(x,y)$ be as in \thref{zig-zag}, and we may assume $b_1 c_1 = bc$. Let $a''$ be such that $a'' c_0 \equivls_M a'c$, which can be done because $c = c_1 \equivls_M c_0$. We then have $a \equivls_M a''$ as well as $a \ind^K_M b_1$, $a'' \ind^K_M c_0$ and $b_1 \ind^*_M c_0$, because $b_1 \models q(x)|_{M c_0}$, so by \thref{weak-independence-theorem} we have that $p_0(x, b_1) \cup p_1(x, c_0)$ does not Kim-divide over $M$. Since $(b_i, c_i)_{i < \omega}$ is a tree Morley sequence over $M$, we can apply both parts of \thref{sub-tree-morley-sequences} to see that $(b_{2i+1}, c_{2i})_{i < \omega}$ is a tree Morley sequence over $M$. Hence by Kim's lemma for tree Morley sequences (\thref{kims-lemma-tree-morley-sequences}) we have that
\[
\bigcup_{i < \omega} p_0(x, b_{2i+1}) \cup p_1(x, c_{2i})
\]
is consistent. Thus
\[
\bigcup_{i < \omega} p_0(x, b_{2i+1}) \cup p_1(x, c_{2i+2})
\]
is consistent, as this is contained in the above set. Again, by \thref{sub-tree-morley-sequences}, we have that $(b_{2i+1}, c_{2i+2})_{i < \omega}$ is a tree Morley sequence over $M$. Since $b_1 c_2 \equiv_M bc$ we thus have by Kim's lemma for tree Morley sequences (\thref{kims-lemma-tree-morley-sequences}) again that $p_0(x, b) \cup p_1(x, c)$ does not Kim-divide over $M$, which finishes the proof.
\end{proof}

\bibliographystyle{alpha}
\bibliography{bibfile}


\end{document}